\def\sfrac#1#2{#1/#2}
\def\sklfrac#1#2{(#1/#2)}
\newcommand{\lleft}{\left}
\newcommand{\rrvert}{\vert}
\newcommand{\rright}{\right}
\newcommand{\rrVert}{\Vert}
\newcommand{\llvert}{\vert}
\newcommand{\llVert}{\Vert}
\newcommand{\eqref}[1]{(\ref{#1})}
\newcommand{\rank}{\operatorname{rank}}
\newcommand{\supp}{\operatorname{supp}}
\newcommand{\im}{\operatorname{Im}}
\newcommand{\sign}{\operatorname{sign}}
\newcommand{\PH}{\mathrm{PH}}
\newcommand{\Var}{\operatorname{Var}}
\def\Q{\mathbb{Q}}
\def\R{\mathbb{R}}
\def\F{\mathbb{F}}
\def\Z{\mathbb{Z}}
\def\cD{\mathcal{D}}
\def\cM{\mathcal{M}}
\def\cN{\mathcal{N}}
\def\cP{\mathcal{P}}
\def\cS{\mathcal{S}}
\def\cX{\mathcal{X}}
\def\cY{\mathcal{Y}}
\newcommand{\E}{\mathbb{E}} 
\newcommand{\bI}{{\mathbf{I}}}
\newcommand{\eps}{\varepsilon}
\newcommand{\be}{\mathbf{e}}
\newcommand{\given}{\mid}
\newtheorem{lem}{Lemma}[section]
\newtheorem{teo}[lem]{Theorem}
\newcommand{\iid}{\mathrm{i.i.d.}}
\newcommand{\ninf}{n\to\infty}
\newcommand{\fmax}{p_{\max}}
\newcommand{\fmin}{p_{\min}}
\def\Ymax{Y_{\max}}
\def\hPH{\widehat{\PH}}
\def\Dgm{\mathrm{Dgm}}
\def\Diag{\mathrm{Diag}}
\newcommand{\hookdrarrow}{\mathrel{\rotatebox[origin=c]{45}{$\hookrightarrow$}}}
\newcommand{\hookurarrow}{\mathrel{\rotatebox[origin=c]{-45}{$\hookrightarrow$}}}
\begin{document}
\begin{frontmatter}

\title{Topological consistency via kernel estimation}
\runtitle{Topological consistency}

\begin{aug}
\author[A]{\inits{O.}\fnms{Omer}~\snm{Bobrowski}\corref
{}\thanksref{A}\ead[label=e1]{omer@math.duke.edu}},
\author[B]{\inits{S.}\fnms{Sayan}~\snm{Mukherjee}\thanksref{B}\ead
[label=e2]{sayan@stat.duke.edu}}
\and
\author[C]{\inits{J.E.}\fnms{Jonathan E.}~\snm{Taylor}\thanksref
{C}\ead[label=e3]{jonathan.taylor@stanford.edu}}
\address[A]{Department of Mathematics, Duke University, Durham NC
27708, USA.\\ \printead{e1}}
\address[B]{Departments of Statistical Science, Computer Science, and
Mathematics, Duke University, Durham NC 27708, USA. \printead{e2}}
\address[C]{Department of Statistics, Stanford University, Stanford,
CA 94305-4065, USA.\\ \printead{e3}}
\end{aug}

%
\received{\smonth{12} \syear{2014}}
%
\revised{\smonth{4} \syear{2015}}

%
\begin{abstract}
We introduce a consistent estimator for the homology (an algebraic
structure representing
connected components and cycles) of level sets of both density and
regression functions.
Our method is based on kernel estimation. We apply
this procedure to two problems: (1)~inferring the homology structure
of manifolds from noisy observations, (2)~inferring the persistent homology
(a~multi-scale extension of homology) of either density or regression functions.
We prove consistency for both of these problems. In addition to the theoretical
results, we demonstrate these methods on simulated data for binary
regression and clustering applications.
\end{abstract}

%
\begin{keyword}
\kwd{clustering}
\kwd{homology}
\kwd{kernel density estimation}
\kwd{topological data analysis}
\end{keyword}
\end{frontmatter}

\section{Introduction} \label{Introduction}

Level set estimation for probability density functions has been
extensively studied in the past few decades.
The basic formulation of the problem is as follows. Let $p:\R^d \to\R
$ be an unknown probability density function and define $D_L:=  \{
x\in\R^d: p(x)\ge L \}$ to be the $L$th super level set of $p$ (from
here on we will drop the word ``super''). Given a sample $ \{
X_1,\ldots, X_n \}$ of $\iid$ observations drawn from $p$, we would
like to estimate the set $D_L$. Recovering the level sets of density
functions have shown to be useful in various applications such as
clustering and cluster analysis \cite
{cuevasestimating2000,cuevascluster2001,hartiganclustering1975,mullerexcess1992,mullerexcess1991,polonikmeasuring1995},
pattern recognition \cite
{cuevaspattern1990,devroyedetection1980,grenanderabstract1981},
anomaly detection \cite{bailloset2000,cuevasplug-approach1997},
and econometrics \cite
{deprinsfarrell1983,deprinsmeasuring2006,farrellmeasurement1957,korostelevminimax1993}
(where recovering the support of a distribution and its boundary is
used for measuring efficiency).

Various solutions have been proposed to the level set estimation
problem. Standard solutions include the plug-in estimator \cite
{bailloconvergence2001,baillototal2003,cuevasplug-approach1997,molchanovempirical1991,molchanovlimit1998},
the excess mass estimator \cite
{hartiganestimation1987,mullerexcess1992,mullerexcess1991,nolanexcess-mass1991,polonikmeasuring1995,tsybakovnonparametric1997},
and the ``naive'' estimator \cite
{cuevasboundary2004,devroyedetection1980,walthergranulometric1997}.
The distance measure used to evaluate the performance of these
estimators is usually either the Hausdorff distance or the Lebesgue
distance (the volume of the difference between two sets). In this paper
we wish to study level sets estimation from a topological perspective.
Rather than trying to achieve an accurate recovery for the actual shape
of the level sets, we wish to recover their qualitative topological
properties (such as connected components and holes). Unfortunately,
minimizing the Hausdorff or Lebesgue distance does not provide any
guarantees for the quality of the topological recovery. Therefore, we
have to consider a new type of an estimator. The sets in Figure~\ref
{fig:level_est} demonstrate the fact that minimizing the Hausdorff (or
Lebesgue) distance can still result in very different topological spaces.

The motivation for studying the topology of level sets comes from the
clustering problem. Given a set of observations generated by a
probability density function $p:\R^d\to\R$, clustering can be
loosely described as identifying and characterizing the connected
components of either the support of $p$ or one of its level sets (cf.
\cite{hartiganclustering1975,MacQueen67,SneathSokal73,Tyron39}).
From a topological perspective, clustering can be viewed as a question
about the \emph{homology} of the level sets. Briefly, the homology of
a topological space $X$ is a set of Abelian groups, denoted by $ \{
H_0(X), H_1(X),\ldots \}$, where the elements of $H_0(X)$ contain
information about the connected components of $X$, and for $k>0$, the
group elements of $H_k(X)$ contain information about ``cycles,'' or
``holes'' of different dimensions (see Section~\ref{sec:topdefs} for
more details).
From the perspective of algebraic topology, the clustering problem is
thus equivalent to recovering $H_0(X)$ where $X$ is either the support
of the distribution or a selected level set.
A statistical perspective of the recent efforts in topological data
analysis (TDA) \cite
{balakrishnanminimax2012,carlssontopology2009,edelsbrunnerpersistent2008,niyogifinding2008,niyogitopological2011}
has been to extract topological invariants, and homology in particular,
from random data. For example, recovering $H_1$ provides information
about holes or loops in the data, which is useful in various
applications such as network coverage \cite{desilvacoverage2007} or
recovering periodic behavior \cite{pereasliding2013}. The idea is
that these topological summaries are useful for statistical inference
and robust under various transformations. Our goal is therefore to
examine level set estimation when the objective is not only to recover
$H_0(X)$ but rather the entire set of homology groups.


%
\begin{figure}[t]

\includegraphics{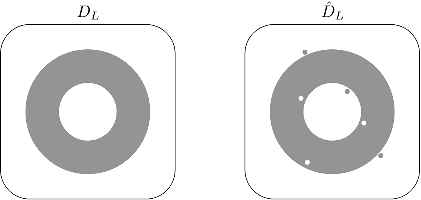}

\caption{A schematic picture illustrating the difficulty in estimating
the homology of level sets. Suppose that $D_L$ is the annulus on the
left and $\hat D_L$ is its estimate on the right. While in both
Hausdorff and Lebesgue distance the sets $D_L$ and $\hat D_L$ are
close, the homology of these sets is completely different. In
particular, $D_L$ has a single connected component and a single hole,
while $\hat D_L$ has four of each. By taking the radius of the small
circles to be as small as we wish, we can make both the Hausdorff and
the Lebesgue distance to be arbitrarily small, while topologically we
are looking at two different spaces.}
\label{fig:level_est}
\end{figure}


The idea of characterizing points or subsets of $\R^d$ by their
homology was developed in a series of papers in the late 1990s \cite
{robinscomputing1998,robinscomputing2000}. Asymptotic and
non-asymptotic analysis of consistency and convergence of
topological summaries as the number of observations increase has been
examined for a variety of geometric objects using a variety of
statistical and probabilistic tools
\cite
{adlerpersistent2010,adlercrackle2014,balakrishnanminimax2012,balakrishnantight2013,bendichlocal2012,bobrowskidistance2014,bobrowskitopology2014,bubenikstatistical2010,chazalsampling2009,kahlerandom2011,kahlelimit2013,niyogifinding2008,niyogitopological2011}.
In the statistics and empirical process community, a version of the
topology inference problem was presented as inference of the empirical
geometry of data \cite{Koltchinskii2000}.

The main objective of this paper is to provide a consistent method for
recovering the homology of the level sets $D_L$ of functions $f:\R
^d\to\R$,
where $f$ will be either a probability density function or a regression
function. The standard plug-in idea would be to use a kernel-based
estimator $\hat f$ to construct an estimator $\hat D_L$ to the level
set. The problem with this approach is that due to the discrete nature
of homology even a tiny error in the set estimate $\hat D_L$ can
introduce a significant error in homology. For example, an
infinitesimally small region included by mistake can increase the
number of components, while a small region excluded by mistake might
introduce a hole. Such errors in homology estimation may occur no
matter how small the extraneous components and holes are. This problem
is illustrated in Figure~\ref{fig:level_est}.

%
\begin{figure}[b]

\includegraphics{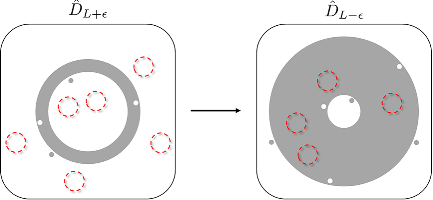}

\caption{An\vspace*{1pt} illustration of the filtering mechanism underlying the
homology estimator presented in this paper. Suppose that the set of
interest $D_L$ is the same as in Figure~\protect\ref{fig:level_est}.
Both estimates $\hat D_{L-\eps}$ and $\hat D_{L+\eps}$ have the wrong
homology. The dashed circles in each figure mark the locations of the
extraneous features (components and holes) in the other. We observe
that none of the extraneous features exist in both sets. Since the
image of the map $\imath_*$ contains only the topological features
that exist in both $\hat D_{L+\eps}$ and $\hat D_{L-\eps}$, it will
consist of a single component and a single hole -- the correct homology
of $D_L$.}\label{fig:levels_sketch}
\end{figure}

The main result in this paper presents a robust homology estimator for
the level sets of both density and regression functions, that overcomes
these difficulties. We show that instead of using $\hat D_L$ as an
estimate, one should consider the inclusion map between the nested
pairs~-- $\hat D_{L+\eps} \subset\hat D_{L-\eps}$ (for a properly
chosen $\eps>0$). The key object of interest is then the following
induced map between
the homology groups of the two level sets:
\[
\imath_*: H_*(\hat D_{L+\eps})\to H_*(\hat D_{L-\eps}),
\]
where ``$*$'' is a standard notation for an arbitrary degree. Inference
of the homology at a single level is
noisy, however the map $\imath_*$ serves as a filter for the
homological noise (see Figure~\ref{fig:levels_sketch}). In particular,
we will show that the image of this map -- $\im(\imath_*)$ -- is
isomorphic to the homology of $D_L$ with a high probability. This
statement is formalized by Theorem~\ref{teo:persistence}.

There are two direct implications for recovering the homology of level
sets: recovering the homology of a manifold from a noisy sample and
inference of the persistent homology of a function. For both
applications, we make use of kernel density estimation to infer the
image of the map $\imath_*$ between the homology groups of different level
sets. An interesting observation is that the conditions to recover the
homology of the manifold or regression function do not require
consistency of the kernel estimator.

The first application is inferring the homology of a manifold from a
noisy sample. This problem was previously studied in \cite
{balakrishnanminimax2012,niyogitopological2011}.
In this paper, we show that for a wide class of noise models one can
recover the homology of a manifold using fewer assumptions than
previous methods and analysis. This result is stated in Theorem~\ref
{teo:manires}.



The second application is estimating the \emph{persistent homology} of
the function $f$. Persistent homology (described in Section~\ref
{sec:topdefs}) is a multi-scale topological summary. The main idea is
instead of considering the homology of a single level $D_L$, the entire
sequence of level sets is considered as $L$ decreases from $\infty$ to
$-\infty$. One then tracks at what values of $L$ changes in homology
occur. The logic behind this computation is that homological features
that persist across a wide range of levels are stable features while
the other homological features are transient or noisy. This result is
stated in Theorem~\ref{teo:ph_consistency}.

The paper is structured as follows. In Section~\ref{sec:topdefs}, we
state the topological concepts and definitions we will use in this
paper, namely homology and persistent homology. The main results of the
paper are stated in Section~\ref{sec:results} with the proofs in the
\hyperref[append]{Appendix}. In Section~\ref{sec:compute}, we provide a procedure to
estimate the
homology of level sets. Intuition about the estimator as well as
results on simulated data are given in Section~\ref{simulations}. We
close with a discussion.


\section{Topological preliminaries} \label{sec:topdefs}


In this section, we introduce the basic ideas of homology and
persistent homology. To help fix ideas, we first present a particular
example of persistent homology related to agglomerative hierarchical clustering.


\subsection{Homology}


We develop the concept of homology intuitively, for a more rigorous and
comprehensive treatment see \cite
{hatcheralgebraic2002,munkreselements1984}.
Let $X$ be a topological space. The \textit{homology} of $X$ is a set
of Abelian groups $ \{H_k(X) \}_{k=0}^\infty$, called
homology groups.
In this paper, we consider homology with coefficients in a field
$\mathbb{F}$, in this case $H_k(X)$ is actually a vector space. The
zeroth homology group $H_0(X)$ is generated by elements that represent
connected components of $X$. For example, if $X$ has three connected
components, then $H_0(X) \cong\mathbb{F} \oplus\mathbb{F} \oplus
\mathbb{F}$ (here $\cong$ denotes group isomorphism), and each of the
three generators of this group corresponds to a different connected
component of $X$. For $k\ge1$, the $k$th homology group $H_k(X)$ is
generated by elements representing $k$-dimensional ``holes'' or
``cycles'' in $X$. An intuitive way to think about a $k$-dimensional
hole is as the result of taking the boundary of a $(k+1)$-dimensional
body. For example, if $X$ is a circle then $H_1(X) \cong\mathbb{F}$,
if $X$ is a 2-dimensional sphere then $H_2(X) \cong\mathbb{F}$, and
in general if $X$ is a $n$-dimensional sphere, then
\[
H_k(X) \cong\cases{ \mathbb{F}, &\quad$k=0,n$,
\cr
\{0 \}, &\quad
otherwise.}
\]
Another interesting example is the $2$-dimensional torus denoted by $T$
(see Figure~\ref{fig:torus}). The torus has a single connected
component, therefore $H_0(T) \cong\mathbb{F}$, and a single
$2$-dimensional hole (the void inside the surface) implying that
$H_2(T) \cong\F$. As for $1$-cycles (or closed loops) the torus has
two distinct features (see Figure~\ref{fig:torus}) and therefore
$H_1(T) \cong\F\oplus\F$.

The ranks of the homology groups (the number of generators) are called
the Betti numbers, and are denoted by $\beta_k(X) \triangleq\rank
(H_k(X))$. When we refer to all the homology groups simultaneously, we
use the notation $H_*(X)$.


%
\begin{figure}[t]

\includegraphics{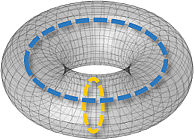}

\caption{The $2$-dimensional torus and its cycles. The torus has a
single connected component and a single $2$-cycle (the void locked
inside the torus). In addition, it has two distinct $1$-dimensional
cycles (or closed loops) represented by the two curves in the figure.
Consequently, the Betti numbers of the torus are $\beta_0 = 1, \beta
_1 = 2, \beta_2 =1 $.}
\label{fig:torus}
\end{figure}


In addition to providing a summary for a single space, homology can
also characterize the topological behavior of functions. Let $f:X\to Y$
be a map between two topological spaces, then homology theory provides
a way to define the ``induced map'' $f_*: H_*(X)\to H_*(Y)$ mapping
between the homology groups of the two spaces.

Another term we will use is \emph{homotopy equivalence} (cf. \cite
{hatcheralgebraic2002,munkreselements1984}). Loosely speaking, two
topological spaces $X,Y$ are homotopy equivalent if we can continuously
transform one into the other. We denote this property by $X\simeq Y$.
If $X\simeq Y$ then they have the same homology, that is, $H_*(X) \cong H_*(Y)$.


\subsection{Persistent homology}\label{sec:intro_ph}


Let $\cX=  \{X_t \}_{t=a}^b$ be a filtration of topological spaces,
such that $X_{t_1} \subset X_{t_2}$ if $t_1 < t_2$.
As the parameter $t$ increases, the homology of the spaces $X_t$ may
change (e.g., components are added and merged, cycles are formed and
filled up). The \emph{persistent homology} of $\cX$, denoted by $\PH
_*(\mathcal{X})$, keeps track of this process. Briefly, $\PH_*(\cX)$
contains the information about the homology of the individual spaces $\{
X_t\}$ as well as the mappings between the homology of $X_{t_{1}}$ and
$X_{t_{2}}$ for every $t_{1} < t_{2}$. The \emph{birth time}
of an element in $\PH_*(\cX)$ can be thought of as the value of $t$
where this element appears for the first time.
The \emph{death time} is the value of $t$ where an element vanishes,
or merges with another existing element. We refer the reader to \cite
{edelsbrunnerpersistent2008,edelsbrunnercomputational2010,ghristbarcodes2008,zomorodiancomputing2005}
for more details and formal definitions. Another perspective of
persistence homology is as a summary statistic of point cloud data that
is robust to certain invariances, this perspective has been developed
in \cite
{blumbergrobust2013,bubenikstatistical2015,mileykoprobability2011,turnerfrechet2012}.

A useful way to describe persistent homology is via
the notion of \textit{barcodes}.
A barcode for the persistent homology of a filtration $\cX$
is a collection of graphs, one for each order of homology group. A
bar in the $k$th graph, starting at $b$ and ending at $d$ ($b\le d$)
indicates the existence of a generator of $H_k(X_t)$ (or a $k$-cycle)
whose birth and death times are $b,d$, respectively.
In Figure~\ref{fig:ph_example}, we present an example for a barcode
generated in the following way. We take a sample of $n=50$ points
$P_1,\ldots, P_n \in\R^2$ sampled from a uniform distribution on an
annulus. We then define $X_r = \bigcup_{i} B_r(P_i)$ to be the union
of closed balls around the sample points. Increasing $r$ makes the
space $X_r$ grow. In this process connected components merge, and
cycles are formed and then filled up. In Figure~\ref
{fig:ph_example}(a), we present a few snapshots of the space $X_r$ for
different values of $r$ where different features show. The barcode in
Figure~\ref{fig:ph_example}(b) presents a summary of all the homology
features in this process. We can see that there are two bars that are
significantly longer than the others (one in $H_0$ and one in $H_1$)
indicating that the underlying space has a single connected component,
and a single cycle (as the annulus does).


%
\begin{figure}

\includegraphics{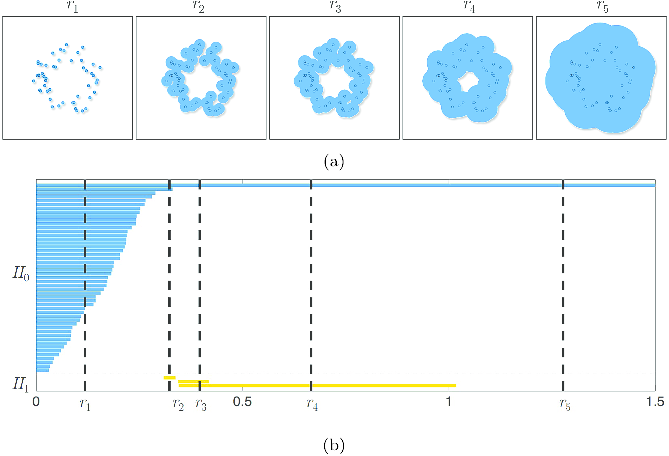}

\caption{(a) $X_r$ is a union of balls of radius $r$ around a random
set of $n=50$ points, generated from a uniform distribution on an
annulus in $\R^2$. We present five snapshots of this filtration.
(b) The persistent homology of the filtration $\{X_r\}_{r\ge0}$. The
$x$-axis is the radius of the balls, and the bars represent the
homology features that are born and died. For $H_0$ we observe that at
radius zero the number of components is exactly $n$ and as the radius
increases components merge (or die). Note that when two components
merge, we terminate the bar for one of them, and the merged component
is represented by the bar we keep. This is a standard representation
that comes as the result of the algebraic structure underlying
persistent homology (cf. \cite{zomorodiancomputing2005}). The cycles
show up later in this process. There are two bars that are
significantly longer than the others (one in $H_0$ and one in $H_1$).
These correspond to the true topological features of the annulus.}\label{fig:ph_example}
\end{figure}


For a given space, there are many choices of filtrations (sequences of
nested subspaces). In this paper the filtrations we work with are the
(super) level sets of functions.
Specifically, let $f:\R^d\to\R$ and let $D_L$ be a level set of $f$.
As the level $L$ is decreased from $\infty$ to $-\infty$ the sets
$D_L$ grow, and in this process components and cycles are created and
destroyed. We denote by $\PH_*(f)$ the persistent homology for this process.

To show later that we can recover the persistent homology structure, we
will need a notion of distance between the persistent homology of two
different filtrations.
If $\cX$ is a filtration, the \emph{$k$th persistence diagram} of
$\cX$, denoted by $\Dgm_k(\cX)$ is the set of all pairs $(b,d)$ of
birth--death times of features in $\PH_k(\cX)$.
The bottleneck distance between the persistent homology of the
two filtrations $\cX$ and $\cY$ is defined as
\[
d_B\bigl(\PH_k(\cX),\PH_k(\cY)\bigr) =
\inf_{\gamma\in\Gamma}\sup_{p \in
{\Dgm_k}(\cX)} \bigl\llVert p-
\gamma(p)\bigr\rrVert _{\infty}.\
\]
The set $\Gamma$ consists of all the bijections $\gamma:{\Dgm_k(\cX
)\cup\Diag}\to{\Dgm_k(\cY)\cup\Diag}$, where
$\Diag=  \{(x,x): x\in\R \}\subset\R^2$ is the
diagonal line,
and $\llVert \cdot\rrVert _\infty$ is the sup-norm in $\R^2$.
In other words, we are looking for a matching between the points in
$\Dgm_k(\cX)$ and $\Dgm_k(\cY)$ that requires the minimal
translations of birth and death times. We add the diagonal to each
diagram for two reasons. First, we want to be able to consider diagrams
with different numbers of features, and second, we want to allow
deleting points from a diagram (by matching them to the diagonal)
rather than forcing them to match.


%
\begin{figure}

\includegraphics{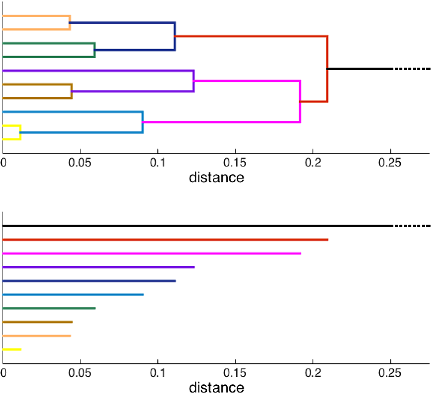}

\caption{Persistent homology and hierarchical clustering. The figure
on top is the dendrogram generated by a set of 10 random points in the
interval $[0,1]$. The bottom figure is the barcode generated by the
$0$-persistent homology for the sub-level sets of the distance function
from the same set of points. The $x$-axis represents function values
(distance, in our case). In this example, all the connected components
are created at distance zero, and only differ by their death point
(when two components merge). Note, that one of the components (the top
bar) lives forever. The death points in the barcode correspond to nodes
in the dendrogram, we marked the bars with different colors matching
the relevant part of the dendrogram.}\label{fig:dendro}\label{default}
\end{figure}

To conclude this section, we note that the zeroth persistent homology,
$\PH_0$, is closely related to hierarchical clustering as the
following example will illustrate.
Let $\cP\subset\R^d$ be a finite set of points in Euclidean space.
We define the distance function from the set $d_{\cP}:\R^d \to\R$ as
\[
d_{\cP}(x) = \min_{p\in\cP} \llVert x-p\rrVert.
\]
In this case, computing the $0$th persistent homology for the sub level
set filtration of $d_\cP$ is very simple. We start at level $0$ with
just the finite set $\cP$, and as we increase the level we merge connected
components according to the distances between points in $\cP$. The bottom of
Figure~\ref{fig:dendro} is the barcode generated by such a process,
the top figure is the dendrogram generated by the same set of points.
One can observe that the end points of the bars in the barcode are the
nodes in the dendrogram.


\section{Statistical model and main results}\label{sec:results}


Given a function $f:\R^d\to\R$ the objects we analyze in this
paper are the (super) level sets of $f$
%
%
\begin{equation}
\label{eq:def_DL} D_L \triangleq \bigl\{x\in\R^d: f(x) \ge
L \bigr\}.
\end{equation}
Note that for any $L_1 < L_2$ we have $D_{L_2} \subset D_{L_1}$.

Previous results on level set estimation usually require some
assumptions on either the function $f$ (smooth, non-flat, etc.), or the
shape of the level set (convex, star-shaped, elliptic, etc.). For the
purpose of homology estimation, our main assumption on $f$ is
``tameness'' as defined in \cite{bubenikcategorification2014}.


%
\begin{defn}
Let $f:\R^d \to\R$, and $D_L$ as defined in \eqref{eq:def_DL}.
\begin{enumerate}[2.]
\item[1.]
We say that $L$ is a \emph{homological regular value} if there exists
$\eps> 0$ such that
for every $ v_2 \le v_1 $ in $(L-\eps,L+\eps)$ the map
$H_k(D_{v_1})\to H_k(D_{v_2})$ induced by inclusion is an isomorphism
for every $k\ge0$.

Otherwise, we say that $L$ is a \emph{homological critical value}.

\item[2.] A function $f$ is called \emph{tame} if it has a finite number
of homological critical values, and $\rank(H_k(D_L))$ is finite for
all $L$ and $k$.
\end{enumerate}
\end{defn}


Our main goal in this paper is to present a consistent method for
recovering the homology of a given level set $D_L$. We
will examine the level sets of two classical quantities of interest
in statistics:
\begin{enumerate}[2.]
\item[1.] Density functions~-- Given $\operatorname{Data} = \{X_1,\ldots, X_n \}
\stackrel{\iid}{\sim} p(x)$, where $p$ is a probability density
function, our objective is to recover the level sets of $f=p$.

\item[2.] Regression functions~-- Given $\operatorname{Data} = \{(X_1,Y_1),\ldots, (X_n,Y_n) \}
\stackrel{\iid}{\sim} p_{X,Y}(x,y)$, where $p_{{X,Y}}(x,y)$ is a joint
probability density function and we state $p:\R^d\to\R$ as the
marginal density of $X$.
Our objective is to recover the level sets of the regression function
$f(x) \triangleq\E \{Y\given X=x \}$.
\end{enumerate}
A common procedure to recover the homology of an unknown space $S$ from a
random sample $\cX\subset S$ is to compute the homology of a
union of closed balls around the sample points
%
%
\begin{equation}
\label{eq:union_balls} U(\cX, r):= \bigcup_{X\in\cX}
B_r(X),
\end{equation}
for some choice of radius $r$ (cf. \cite
{bobrowskitopology2014,niyogifinding2008}).
In the level-set estimation literature, this procedure is known as the
``naive'' estimator \cite
{cuevasboundary2004,devroyedetection1980,walthergranulometric1997}.
We can use this idea to estimate the homology of the set $D_L$ using
the following procedure (P1):
\begin{enumerate}
\item Use the entire data set to construct an estimator $\hat f$.
\item Using the estimator $\hat f$, define
\[
\cX^L = \bigl\{ X_i: \hat f(X_i) \ge L\bigr
\},
\]
as the set of data points lying in the $L$th level set of $\hat f$.
\item Consider $U(\cX^L, r)$ as an
estimate of $D_L$, and the homology $H_*(U(\cX^L, r))$ as an
estimate of $H_*(D_L)$.
\end{enumerate}
We will use kernel estimators for $\hat f$ in both the
regression and density estimation case. A key difficulty in the above
procedure is that the estimator $\hat f$ may introduce errors in the\vspace*{1pt}
filtering step 2 of the above procedure. In \cite
{cuevasboundary2004,devroyedetection1980,walthergranulometric1997}
it is shown that small errors in the estimate $\hat f$ are translated
to small errors in terms of the Hausdorff or Lebesgue distances.
However, since homology is a discrete descriptor, even tiny errors in the
filtering step can introduce large errors in the homology estimates.
For example, even a single point incorrectly included in the
level set assignment can form an extra connected component, and
increase the zeroth Betti number by one (see Figure~\ref{fig:level_est}).
One of the main challenges we will address in this paper is providing
an estimator that is robust to this type of error.

Given a kernel function $K:\R^d\to\R$ we construct our estimators as
follows. In the density estimation case,
we define
\[
\hat f_n(x) = \hat p_n(x) \triangleq\frac{1}{n \times C_K r^d}
\sum_{i=1}^n K_r(x-X_i),
\]
where $X_1,\ldots,X_n$ are the observed data, $K_r(x) = K(x/r)$, and
$C_K$ is a normalizing constant defined below. In the regression
setting, we use the Nadaraya--Watson estimator \cite
{Nadaraya64,silvermandensity1986,Watson64}
\[
\hat f_n(x) \triangleq\frac{\sum_{i=1}^n Y_i K_r(x-X_i)}{\sum_{i=1}^n K_r(x-X_i)},
\]
where $\{(X_1,Y_1),\ldots,(X_n,Y_n)\}$ are the observed data.

The kernel functions $K(x)$ we consider satisfy the following
conditions (C1):
\begin{enumerate}
\item The support of the kernel function is contained within the unit
ball of radius $1$, that is, $\supp(K) \subset B_1(0)$.
\item The kernel function has a maximum at the origin, with $K(0) = 1$,
and $\forall x: K(x) \in[0,1]$.
\item The kernel function is smooth within the unit ball, and
\[
\int_{\R^d} K(\xi)\,d\xi= C_K\qquad\mbox{for }
C_K \in(0,1).
\]
\end{enumerate}

Note that the bounded support assumption is very common in level set
estimation procedures (e.g., \cite
{bailloconvergence2001,cuevasplug-approach1997,walthergranulometric1997}).
Weak regularity conditions on the density or regression function will
be required to prove consistency of
the estimates of the homology of level sets. For both density
estimation and regression, we require the density function $p$ to be
tame and bounded, and we define
\[
\fmax\triangleq\sup_{x\in\R^d} p(x).
\]
For density estimation, we also require that for every $L$ the set
$D_L\subset\R^d$ is bounded.
For the regression case, we require in addition the following set of
conditions (C2):
\begin{enumerate}
\item The marginal density of $X$ has compact support, that is, $\supp
(p)$ is compact.
\item The marginal density of $X$ is bounded away from zero within its
support, that is, $\fmin\triangleq\inf_{x\in\supp(p)} p(x)> 0$.
\item The response variables are almost surely bounded, that is, $\llvert {Y_i}\rrvert  \le Y_{\max}$ almost surely for some
non-random value $\Ymax> 0$.
\end{enumerate}

Next, recall step 2 in the procedure (P1), and define
\[
\cX_n^L \triangleq \bigl\{X_i: \hat
f_n(X_i) \ge L; 1\le i \le n \bigr\}. %
\]
The subset $\cX_n^L$ can be used to construct an estimator to the
level set $D_L$:
%
%
\begin{equation}
\label{eq:set_est} \hat D_L(n,r) \triangleq U\bigl(\cX_n^L,
r\bigr).
\end{equation}
Note that the radius $r$
is the same $r$ as used for the bandwidth of the kernel function. This
connection is crucial for the proofs.

To overcome the noisiness of the estimator $\hat D_L(n,r)$ discussed
above, we present the following procedure.
First, note that
for any $\eps\in(0,L)$, we have that $\hat D_{L+\eps}(n,r) \subset
\hat D_{L-\eps}(n,r) $. The inclusion map
\[
\imath:\hat D_{L+\eps}(n,r) \hookrightarrow\hat D_{L-\eps}(n,r)
\]
induces a map in homology
%
%
\begin{equation}
\label{eq:i_star} \imath_*:H_*\bigl(\hat D_{L+\eps}(n,r)\bigr) \to H_*\bigl(
\hat D_{L-\eps}(n,r)\bigr).
\end{equation}
We use this map to define
%
%
\begin{equation}
\label{eq:def_est} \hat{H}_*(L,\eps; n) \triangleq\im(\imath_*).
\end{equation}
We will use $\hat{H}_*(L,\eps; n)$ as an estimator for $H_*(D_L)$.
The intuition behind using this inclusion map is as follows. Using
Lemma~\ref{lem:set_bounds}, we can show that with a high probability
we have
%
%
\begin{equation}
\label{eq:level_inclusion} %
\begin{array} {ccccccccc} D_{L+2\eps} & & & &
D_L & & & & D_{L-2\eps}
\\
& \hookurarrow& & \hookdrarrow& & \hookurarrow& & \hookdrarrow&
\\
& & \hat D_{L+\eps}(n,r) & & \stackrel{\imath} {\hookrightarrow} & & \hat
D_{L-\eps}(n,r) & & \end{array} %
,
\end{equation}
where $\hookrightarrow$ represents inclusion.
Assuming that $H_*(D_{L+2\eps}) \cong H_*(D_L) \cong H_*( D_{L-2\eps
})$, then all the cycles in $H_*(D_L)$ must\vspace*{1pt} persist throughout this
entire sequence of inclusions and in particular they should be present
in $\hat{H}_*(L,\eps; n)$. In contrast, any cycles in $\hat D_{L\pm
\eps}(n,r)$ that do not belong to $D_L$ must be terminated as we move
from $\hat D_{L+\eps}(n,r)$ to $\hat D_{L-\eps}(n,r)$ via $D_L$, and
therefore should not be in $\hat{H}_*(L,\eps; n)$. To prove that the
inclusion sequence in \eqref{eq:level_inclusion} holds, we require the
following regularity condition on $L$.


%
\begin{defn}
Given a level $L>0$ and $\eps\in(0,L/2)$, we say that $L$ is $\eps
$-regular if
\begin{eqnarray*}
\partial D_{{L+2\eps}} \cap\partial D_{{L+\sklfrac{3}{2}\eps}} &=& \partial
D_{{L+\sklfrac{1}{2}\eps}}\cap\partial D_{L} = \partial D_{L} \cap
\partial D_{{L-\sklfrac{1}{2}\eps}}
\\
&=& \partial D_{{L-\sklfrac{3}{2}\eps}}\cap\partial
D_{{L-2\eps}} = \varnothing,
\end{eqnarray*}
where ``$\partial$'' is the set boundary.
\end{defn}


This regularity condition basically guarantees sufficient ``separation''
between the level sets involved in the estimation process (its\vspace*{1pt}
importance will become clearer in the proofs).
In particular, if $f$ is continuous in $f^{-1}([L-2\eps,L+2\eps])$,
then $L$ is $\eps$-regular.
We will assume that the levels we are studying are always $\eps$-regular.

We now state the main result in this paper which holds for both the
density estimation as well as regression setting.

\begin{teo}\label{teo:persistence}
Let $L > 0$ and $\eps\in(0,L/2)$ be such that the function $f(x)$ has
no critical values in the range $[L-2\eps, L+2\eps]$.
If $r\to0$, and $nr^d\to\infty$, then for $n$ large enough we have
\[
\mathbb{P} \bigl(\hat H_*(L,\eps;n) \cong H_*(D_L) \bigr)
\ge1 - 6 ne^{-C^\star_{{\eps/2}} n r^d}.
\]
In particular, if $nr^d \ge D \log n$ with $D > (C^\star_{\eps
/2})^{-1}$, then
\[
\lim_{\ninf}\mathbb{P} \bigl(\hat H_*(L,\eps; n)
\cong H_*(D_L) \bigr) = 1.
\]
\end{teo}

The constant value $C^\star_\eps$ in the theorem above is
%
%
\begin{equation}
\label{eq:C_star_density} C^\star_\eps= \frac{\eps^2 C_K}{3{\fmax}+ \eps},
\end{equation}
for density estimation, and
%
%
\begin{equation}
\label{eq:C_star_regression} C^\star_\eps= \frac{\eps^2 \fmin^2 C_K}{3(\Ymax^2+\eps^2)\fmax
+2\eps\fmin(\Ymax+\eps)},
\end{equation}
for regression (see the \hyperref[append]{Appendix} for more details).

Theorem~\ref{teo:persistence} states that if we want to recover the
homology of the level set $D_L$ we can compute the image of the
homology map as we move from $\hat D_{L+\eps}(n,r)$ to the slightly
larger complex $\hat D_{L-\eps}(n,r)$.
We note that another possible solution to this estimation problem is to
dilate the estimated set $\hat D_L$ directly (e.g.,~by covering the
points with a slightly larger balls), as suggested by the results in
\cite{chazalsampling2009}. However, such a method will require
further knowledge about the level sets (such as their feature size),
and the gradient of the function $f$, which is not required by the
method we propose here.

\begin{rem*}
In order to choose $D$, we need to know the values of $\fmin,\fmax$
and $Y_{\max}$, which might not be directly available.
There are a few possible ways to address this problem:
\begin{enumerate}
\item
Since all we need are bounds and not the precise values, one option is
to make the broad assumption that $p$ belongs to a class of density
functions bounded by some fixed values, and use a similar assumption
for $Y$.
\item Another option is to estimate these values from the data, taking
values as high as we want for the upper bounds $\fmax$, $Y_{\max}$ (and
as low as we want for the lower bound $\fmin$), to guarantee that the
estimated values are indeed valid bounds with high probability. Using
estimated values instead of the true ones affects the theoretical
validity of Theorem~\ref{teo:persistence}, but we believe it should
have a negligible effect in practice.
\item Finally, another option is to take $nr^d \gg\log n$ (e.g., $nr^d
= (\log n)^2$). Then it is guaranteed that the probability converges to
one, and we do not need to know the value of $C_{\eps}$.
\end{enumerate}
\end{rem*}


In the following sections we describe two applications for the
estimator we proposed, addressing problems that are of significant
interest in the fields of topological data analysis and machine learning.


\subsection{An application to manifold learning}


Let $\cM$ be a smooth $m$-dimensional, closed manifold (compact and
without a boundary), embedded in $\R^d$.
Given a random sample $\cX_n =  \{X_1,\ldots, X_n \}
\subset\R^d$
we wish to recover the homology of $\cM$.
The case where the observations are drawn directly from the manifold
(i.e., $\cX_n \subset\cM$), has been extensively studied (see \cite
{bobrowskitopology2014,niyogifinding2008}). In \cite
{bobrowskitopology2014}, the following asymptotic result was presented.

\begin{teo}[(Theorem 4.9 in \cite{bobrowskitopology2014})]
If $nr^d \ge C \log n$, and $C > (\omega_d \fmin)^{-1}$, then:
\[
\lim_{\ninf}\mathbb{P} \bigl(H_*\bigl(U(
\cX_n, r)\bigr) \cong H_*(\cM) \bigr) = 1,
\]
where $\omega_d$ is the volume of a $d$-dimensional unit ball, and
$\fmin= \inf_{x\in\cM} p(x) > 0$.
\end{teo}

In this section, we extend this result to the case where noise is
present. The term ``noise'' in this context refers to the fact that the
observations do not necessarily lie on $\mathcal M$, but rather in its vicinity. As
an example, consider the observations $X_1,\ldots,X_n$ defined as
%
%
\begin{equation}
\label{eq:ex_dist} X_i = Y_i + Z_i\qquad\mbox{where } Y_i \stackrel{\iid} {\sim} \rho (\cM)\quad\mbox{and}\quad
Z_i \stackrel{\iid} {\sim} \mbox{N}\bigl(\mathbf {0},
\sigma^2 \mathbf{I}_d\bigr),
\end{equation}
where\vspace*{1pt} $Y_i$ is drawn from a distribution $\rho$ that is supported on a
manifold $\cM$, and $Z_i$ is drawn from the normal distribution in the
ambient space $\R^d$. For this model, the methods used to prove
consistency of the estimator in
\cite{bobrowskitopology2014,niyogifinding2008} no longer apply
since the outliers produced by the noise create their own topology, and
interfere with our ability to recover $H_*(\cM)$.

The seminal work in \cite{niyogitopological2011} studies the
following special case. Let $Y_i \stackrel{\iid}{\sim} \rho(\cM)$,
for each
$i \in \{1,\ldots,n \}$ let $\cN_i$ be the normal space
to $\cM$
at $Y_i$, and let $Z_i \sim\mbox{N}(0,\sigma^2 \bI_{d-m})$ be a
multivariate normal variable in the normal space $\cN_i$. Our
observations are then taken to be $X_i = Y_i + Z_i$. Under explicit
assumptions on $\sigma$ and $\cM$, they show that the homology of
$\cM$ can be recovered from $\cX_n$ with a high probability. The work
in \cite{balakrishnanminimax2012} extends this idea to a few other
noise models. The results and proofs in \cite
{balakrishnanminimax2012,niyogitopological2011} are tied to
specific noise models and rely on the parameters of the noise model and
the geometry of $\cM$. We wish to use the result in Theorem~\ref
{teo:persistence} to study the same homology inference problem for a
large class of distributions, and with as few assumptions as possible.

We start by defining a general class of density functions on $\R^d$,
from which it would be possible to extract the homology of $\cM$.


%
\begin{defn}\label{def:noisy_dens}
Let $p:\R^d\to\R_+$ be a probability density function. We say that
$p$ \emph{represents a noisy version} of $\cM$, if there exist $0 < A
< B < \infty$ such that:
\begin{enumerate}
\item For every $L\in[A,B]$ we have $D_L \simeq\cM$.
\item For every $L > B$, we have $D_L \simeq\cM'$, where $\cM'
\subset\cM$ is a compact locally contractible proper subset of $\cM$,
\end{enumerate}
where ``$\simeq$'' stands for homotopy equivalence (see Section~\ref
{sec:topdefs}).
\end{defn}


In other words, we consider density functions $p$ for which there is a
range where the level sets are ``similar'' to $\cM$. For levels higher
than this range, the level sets are ``similar'' to nice subsets of $\cM
$. For example, the distribution in \eqref{eq:ex_dist} satisfies this
conditions for small enough~$\sigma$.
By ``locally contractible'' we refer to the property that every point $x$
has a neighborhood $\cN_x$ that is homotopy equivalent to a single
point. For example, if $\cM'$ is a compact manifold with boundary,
then it is locally contractible. We need this requirement to rule out
the appearance of highly twisted topological spaces.
In Figure~\ref{fig:noisy_ver}, we present a sequence of level sets for
a density function that represents a noisy version of the torus. This
density was generated by taking a uniform distribution on the latitude
angle, a wrapped normal distribution on the longitude angle, and adding
independent Gaussian noise. Note that the level sets are
$3$-dimensional whereas the torus is $2$-dimensional. Nevertheless, we
can see that there is a whole range of levels where they are
topologically equivalent.


%
\begin{figure}

\includegraphics{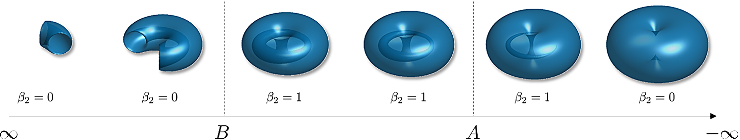}

\caption{In this figure we demonstrate a sequence of level sets for a
density function $p$ that is a noisy version of the $2$-dimensional
torus. The horizontal axis represents the function levels in a
decreasing order. For very high values ($L > B$) we see that the level
sets look like a subset of the torus. Note that they are not real
subsets, since these are $3$-dimensional shapes, whereas the torus is
$2$-dimensional. Inside the range $(A,B)$ the level sets look like the
torus (where $\beta_0 = \beta_2 = 1$, and $\beta_1 = 2$). For low
levels the topology changes again, be we no longer require any assumptions.}
\label{fig:noisy_ver}
\end{figure}


The model described in Definition~\ref{def:noisy_dens}
generalizes the additive Gaussian noise model discussed in \cite
{balakrishnanminimax2012,niyogitopological2011} but is essentially
different than the other noise models in \cite
{balakrishnanminimax2012}. This model is very broad in the sense that
it is not tied to any specific assumptions on the distribution (e.g.,
uniform in the ``clutter'' and ``tubular'' noise models, or having
Fourier transform bounded away from zero in the ``additive'' model \cite
{balakrishnanminimax2012}). In addition, we believe that this model
is more ``natural'' for topological estimation since it emphasizes the
topological behavior of the density rather than making analytic
assumptions on its functional structure.

If we know a priori the values of $A$ and $B$, then the recovery method
would be simple.
Given a sample $\cX_n =  \{X_1,\ldots, X_n \}\stackrel
{\iid}{\sim}
p$, and setting $f=p$, we choose $L$ and $\eps$ such that $[L-2\eps,
L+2\eps] \subset(A,B)$, and compute $ \hat H_*(L,\eps; n)$.
Theorem~\ref{teo:persistence} guarantees that with high probability
$\hat H_*(L,\eps; n)\cong H_*(D_L)\cong H_*(\cM)$.

However, in real problems we are not given $A,B$ so the real challenge
is to recover $\cM$ without knowing the stable range.
To show that the procedure described below is consistent, we require
the following assumptions to hold.
\begin{enumerate}[(ii)]
\item[(i)] $\cM$ is connected and orientable;
\item[(ii)] $B-A > 8\eps$.
\end{enumerate}

The following procedure (P2) will be used to estimate the homology of
$\cM$ from the a noisy sample $\cX_n$.
In this procedure, we will use the estimated Betti numbers defined as
$\hat\beta_k(L,\eps;n) \triangleq\rank(\hat H_*(L,\eps;n))$.
Define
%
%
\begin{equation}
\label{eq:levels} N_\eps:= \sup_{x\in\R^d} \bigl
\lceil{f(x)/2\eps} \bigr\rceil,\qquad L_{\max} = 2\eps N_\eps\quad\mbox{and}\quad L_i = L_{\max}-2i\eps.
\end{equation}
The procedure (P2) is as follows.
%
\begin{enumerate}
%
\item Compute $\hat H_*(L_i,\eps;n)$ for all $i=1,\ldots,N_\eps$.
\item Define
\[
i^\star\triangleq1+\min \bigl\{i\in \{1,\ldots,N_\eps \}:\hat
\beta_m(L_i,\eps;n) = 1 \bigr\}.
\]
This index will be shown to be the first point where we are guaranteed
to observe the homology of $\cM$.
\item Our estimator for the homology of $\cM$ will then be $\hat
H_*(L_{i^\star}, \eps;n)$.
\end{enumerate}
Note that in this procedure a choice has to be made for the parameter
$r$ (the radius of the balls and the bandwidth of the kernel). The
following theorem states that if $r$ is chosen appropriately we can
estimate the homology of a manifold from noisy observations with high
probability.


%
\begin{teo}\label{teo:manires}
Let $\cM$ be a $m$-dimensional closed, connected, orientable manifold
embedded in~$\R^d$.
Let $X_1,\ldots, X_n$ be data points sampled from a density function
$p$ satisfying the conditions in Definition~\ref{def:noisy_dens}.
Choose $r\to0$ that satisfies $nr^d \ge D \log n$ with $D > (C^\star
_{\eps/2})^{-1}$, where $C_\eps$ is defined in \eqref
{eq:C_star_density}. Applying procedure \textup{(P2)}, we then have
\[
\lim_{\ninf}\mathbb{P} \bigl(\hat
H_*(L_{i^\star},\eps;n) \cong H_*(\cM) \bigr) = 1.
\]
\end{teo}


We state here the main ideas used in proving the above, while the
detailed proof is given in the \hyperref[append]{Appendix}. We use Poincar\'{e} duality,
a fundamental idea in algebraic topology. Poincar\'{e} duality relates
homology groups to co-homology groups of closed orientable
$m$-dimensional manifolds, stating that
$H_k(\cM) \cong H^{m-k}(\cM)$, where $H^{m-k}(\cM)$ is the
co-homology of $\cM$ (cf. \cite
{hatcheralgebraic2002,munkreselements1984}).
An important consequence of Poincar\'{e} duality is that $\beta_k (\cM
) = \beta_{m-k}(\cM)$ for every $k=0,\ldots,m$, and in particular
$\beta_0(\cM) = \beta_m(\cM)$.
Our assumption that $\cM$ is connected implies that $\beta_0(\cM) =
1$, and from Poincar\'{e} duality we conclude that $\beta_m(\cM)=1$
as well. In contrast, if $\cM'\subset\cM$ is a proper compact
locally contractible subset of $\cM$ then using a different type of
duality one can show that $\beta_m(\cM') = 0$ (see Proposition 3.46
in \cite{hatcheralgebraic2002}).
Our assumptions on $A,B$ then implies that if $L_i >B$ we have $\beta
_m(D_{L_i}) = 0$, while if $L_i\in(A,B)$ then $\beta_m(D_{L_i}) = 1$.
Therefore, the first $L_i$ for which the $m$-th Betti number switches
from $0$ to $1$ necessarily lies in $(A,B)$, and we can use this $L_i$
to recover the homology of $\cM$. In practice, we defined $i^\star$
to be the second level at which we have $\hat\beta_m(L_i,\eps;n)=1$.
This is a precautionary measure which we discuss in the proof.


\begin{rem*}
\begin{enumerate}[3.]
\item To use the result in Theorem~\ref{teo:manires} one needs to know
the values of $m$ and $\eps$. We consider these values to be crucial
information required to ``extract'' the topology of the manifold. Their
knowledge replaces other assumptions about the geometry of the manifold
which we want to avoid. Note that for $\eps$ we do not require a
precise value but any lower bound would suffice.
\item Also required is the knowledge $L_{\max}$ (or equivalently
$N_\eps$). Note, that when we have a finite sample $ \{X_1,\ldots, X_n \}$ we can estimate $L_{\max}$ using $\hat L_{\max}:=
\max_{i}
 \lceil{f_n(X_i)/2\eps} \rceil$. For every $L> \hat
L_{\max}$ we have $\hat
D_{L}(n,r) = \varnothing$. Therefore, in practice, even if the true
$L_{\max}$ is higher than $\hat L_{\max}$, it does not affect the
procedure, since the higher levels are empty anyway.
\item It is possible that small perturbations in the density function
will generate $m$-dimensional cycles at level sets with $L>B$. To be
able to ignore these cycles when they appear, additional information
about the geometry of the underlying manifold should be provided
(e.g.,~its feature size), otherwise it will be impossible to determine
which of the \mbox{$m$-}dimensional cycles belongs to the manifold (even if
the function $f$ is known completely), and the homology inference
problem is ill-posed. If we want to limit ourselves to use only the
fact that the data is ``concentrated'' around a $m$-dimensional
manifold, then we need to assume the density function allows us to
identify it properly, and that is the essence of Definition~\ref
{def:noisy_dens}.
\end{enumerate}
\end{rem*}


\subsection{Persistent homology and application to clustering}\label
{sec:persistent_homology}


A common topological summary used in TDA is persistent homology (see
Section~\ref{sec:topdefs}). Given a function $f$ the persistent
homology of $f$, $\PH_*(f)$,
tracks when the homology of (super) level-sets of $f$ changes and
serves as a summary of the function. This summary contains information
about the creation and destruction of connected components and cycles
of the level sets. In the case where $f=p$ is a density function, the
zeroth persistent homology $\PH_0(f)$ can viewed a summary of the
evolution of clusters in the data, and can be useful for clustering
algorithms as discussed in Section~\ref{sec:intro_ph}.
By definition, $\PH_*(f)$ is computed from the continuous
filtration $\cD=  \{D_L \}_{L\in\R}$ as $L$ decreases
from $\infty
$ to $-\infty$.
Note that the persistent homology $\PH_*(f)$ contains much more
information than just the homology at each level $D_L$. It also
contains information about mappings between different levels, and hence
enables us to track the evolution of cycles.

In this section, we wish to address the estimation of $\PH_*(f)$ where
$f:\R^d\to\R$ is either a density function (tame and bounded) or a
regression function (satisfying the conditions (C2) as well).
In both cases, we have shown that the estimator $\hat H_*(L, \eps, n)$
defined in \eqref{eq:def_est}, can recover the homology of $D_{L}$ for
every $L$. In order to recover the persistent homology we also need to
make sure that the mappings between different levels are recovered as
well. The error measure we use is the commonly used ``bottleneck
distance'' (see Section~\ref{sec:topdefs}).
To estimate $\PH_*(f)$, recall the definitions of $N_\eps,L_{\max}$,
and $L_i$ in \eqref{eq:levels} and consider the following discrete filtration
\[
\hat{\cD}^\eps\triangleq \bigl\{\hat D_{L_i}(n,r) \bigr
\}_{i\in\Z},
\]
where $\hat D_{L_i}(n,r)$ is defined by \eqref{eq:set_est}.
Denoting the persistent homology of $\hat{\cD}^\eps$ by $\hPH
_*^\eps(f)$, and using the methods presented in this section we prove
the following.


%
\begin{teo}\label{teo:ph_consistency}
If $r\to0$ and $nr^d\to\infty$, then
\[
\mathbb{P} \bigl(d_B \bigl(\hPH_*^\eps(f), {
\PH}_*(f) \bigr) \le 5\eps \bigr) \ge1-3 N_\eps ne^{-C^\star_{\eps/2}nr^d},
\]
where $C^\star_\eps$ is defined in \eqref{eq:C_star_density}
(density) and \eqref{eq:C_star_regression} (regression).
In particular, if $nr^d \ge D\log n$ with $D > (C^{\star}_{\eps
/2})^{-1}$, we have
\[
\lim_{\ninf}\mathbb{P} \bigl(d_B \bigl(
\hPH_*^\eps(f), {\PH}_*(f) \bigr) \le 5\eps \bigr) =1.
\]
\end{teo}


In other words, we state that the estimator $\hPH_*^\eps(f)$ is
``consistent'' up to a given precision of $5\eps$.
Note that we will always have some discretization error since our
estimator is discrete (having an inherent step size $\eps$) while the
filtration we wish to study is continuous. However, one can make $\eps
$ arbitrarily small to achieve higher precision. The smaller value of
$\eps$ we choose the smaller $C^\star_{\eps/2}$ will be and the
convergence of $\hPH_*^\eps(f)$ to $ {\PH}_*(f)$ will be slower.

To prove this theorem (see \hyperref[append]{Appendix}), we invoke Lemma~\ref
{lem:set_bounds} $M$ times in order to form a sequence of inclusions
alternating between level sets $D_L$ and their estimates $\hat
D_L(n,r)$. This alternating sequence is called ``interleaving'' and the
work in \cite{chazalproximity2009} provides means to bound the
distance between the persistent homology computed for these two types
of filtrations. In Section~\ref{simulations}, we provide several
examples for the estimation of persistent homology using $\hPH_*^\eps(f)$.

As we discuss in Section~\ref{sec:compute}, Theorem~\ref
{teo:ph_consistency} can be adjusted to use the filtration of Rips
complexes $ \{R_{L_i}(n,r) \}_{i \in\Z}$ instead of $\{
\hat
D_{L_i}(n,r)\}_{i \in\Z}$.
The work in \cite{chazalscalar2011,chazalpersistence-based2013}
studies a different method to recover the persistent homology of $f$
using Rips complexes. In order to recover $\PH_*(f)$, \cite
{chazalscalar2011} considers the maps $\imath
_*^L:H_*(R_L(n,r))\hookrightarrow H_*(R_L(n, 2r))$ induced by inclusion
for all values of $L$ and for a fixed $r$. The persistence module for
the family of images -- $ \{\im(\imath_*^L) \}_L$ is then
used as an
approximation for $\PH_*(f)$.
In a way, one can think of the transition $R_L(n,r) \hookrightarrow
R_L(n, 2r)$ as playing the same role as the transition $R_{L+\eps
}(n,r) \hookrightarrow R_{L-\eps}(n,r)$ we study in this paper,
``filtering'' the noisy homology.
Changing the radius rather than the level, allows one to avoid the
level discretization that our method relies on, which leads to a more
accurate approximation. On the other hand, this method requires further
assumptions on the model parameters, and computing the estimator is
more complicated. It remains future work to study whether these two
methods could be combined into a more powerful and robust one.

In a different line of work \cite
{bubenikstatistical2010,chungpersistence2009,fasyconfidence2014}
persistent homology is recovered by constructing a kernel-based
estimator $\hat f$ for the function at hand and then computing the
persistent homology of the estimator $\PH(\hat f)$. The work in \cite
{phillipsgeometric2013} presents a different approach by recovering
the sublevel sets of distance-like functions called ``kernel distance'' functions.
The validity of these methods is established by using the stability
theorem \cite{cohen-steinerstability2007} stating that $d_B(\PH
_*(f), \PH_*(\hat f)) \le\Vert{f-\hat f}\Vert_\infty$. There are two
significant advantages to the estimator we propose in this paper.
First, we do not require assumptions about the global sup-norm
convergence of the estimator. Second, computing the estimator $\PH
(\hat f)$ in practice involves discretizing the space, and this may
have a significant effect on the ability to recover small features in
the data (see, e.g., the clustering examples in Section~\ref
{simulations}). The estimator we propose does not require such a discretization.


\section{Computing the homology estimator}\label{sec:compute}


The estimator we propose in Section~\ref{sec:results} requires the
computation of the image between the homology groups of $\hat D_{L+\eps
}(n,r)$ and $\hat D_{L-\eps}(n,r)$ (defined in \eqref{eq:set_est}).
As a review for a more statistical audience, we state the fundamental
tools required to compute this estimator. In general, algorithms for
computing homology of unions of balls require two steps. The first step
is to obtain a combinatorial representation of the geometric object
that is either equivalent in homology or approximately equivalent in
homology to the original geometric object. This step is outlined in
Section~\ref{complexes}.
The combinatorial representation reduces homology computation to a
linear algebra problem. The second step is to apply a set of linear
transformations to this combinatorial representation to compute the
image of the homology groups under the inclusion map between two
complexes. This step is outlined in
Section~\ref{sec:alg}.


\subsection{The \v{C}ech and Vietoris--Rips complex}\label{complexes}


Let $S$ be a set, and $\Sigma\subset2^S$ be a collection of finite
subsets of $S$.
We say that $\Sigma$ is an \emph{abstract simplicial complex} if for
every $A\in\Sigma$ and $B\subset A$ we also have $B\in\Sigma$. In
this section we introduce two special types of abstract simplicial
complex that can be useful for computing the estimators presented in
this paper.

Let $\cX= \{x_1,\ldots,x_n \}$ be a set of points in $\R
^d$, and
suppose that we wish to compute the homology of the union of balls
$U(\cX,r)$ (see \eqref{eq:union_balls}) for some $r>0$.
The \v{C}ech complex is an abstract simplicial complex that allows us to
convert the homology computation problem into linear algebra.
The Vietoris--Rips (or just Rips) complex can be thought of as an
approximation to the \v{C}ech complex. This approximation offers
computational advantages over the \v{C}ech complex but suffers from not
sharing the same direct relation to the homology of $U(\cX,r)$ as the
\v{C}ech complex.
We first provide the definitions for these complexes.

\begin{defn}[(\v{C}ech complex)]\label{def:cech_complex}
Let $\cX=  \{x_1,x_2,\ldots,x_n \}$ be a collection of
points in $\R
^d$, and let $r>0$. The \v{C}ech complex $C(\cX, r)$ is constructed
as follows:
\begin{enumerate}
\item The $0$-simplices (vertices) are the points in $\cX$.
\item A $k$-simplex $[x_{i_0},\ldots,x_{i_k}]$ is in $C(\cX,r)$ if
$\bigcap_{j=0}^{k} {B_{r}(x_{i_j})} \ne\varnothing$.
\end{enumerate}
\end{defn}


%
\begin{defn}[(Rips complex)]\label{def:rips_complex}
Let $\cX=  \{x_1,x_2,\ldots,x_n \}$ be a collection of
points in $\R
^d$, and let $r>0$. The Rips complex $R(\cX, r)$ is constructed as follows:
\begin{enumerate}
\item The $0$-simplices (vertices) are the points in $\cX$.
\item A $k$-simplex $[x_{i_0},\ldots,x_{i_k}]$ is in $R(\cX,r)$ if
$\llVert  x_{i_j} - x_{i_l}\rrVert  \le2r$ for all $0\le j,l
\le k$.
\end{enumerate}
\end{defn}


Figure~\ref{fig:cech} depicts a simple example of a \v{C}ech and Rips
complex in $\R^2$. The figure also highlights the contrast between the
two complexes. The main difference is that the Rips complex is
constructed simply from pairwise intersection information while the
\v{C}ech complex requires high-order information. This difference is
realized in Figure~\ref{fig:cech} in the far left triangle in either
complex. In the Rips complex, the left triangle is filled in to be a
face, since all three pairwise intersections occur. In the \v{C}ech
complex higher-order interactions
are also computed, in this case one observes that the three pairwise
intersections do not overlap resulting in three edges rather than a
filled in face. The main advantage of the Rips complex is computational~-- all we need in order to construct the Rips complex is to compute the
pairwise distances between all the points, rather than to check for all
possible orders of intersections of balls as we would have to for the
\v{C}ech complex.

The Rips complex can be considered as an approximation to the \v{C}ech
complex. It is clear from the definitions that $C(\cX,r) \subset R(\cX
,r)$. In addition, it is shown in \cite{desilvacoverage2007} that
$R(\cX,r) \subset C(\cX, \sqrt{2} r)$. Combining these two
statements we have that
\[
R(\cX,r) \subset C(\cX,\sqrt{2}r) \subset R(\cX,\sqrt{2} r).
\]


%
\begin{figure}

\includegraphics{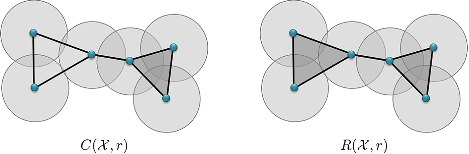}

\caption{On the left -- the \v{C}ech complex $C(\cX,r)$, on the right -- the Rips complex $R(\cX,r)$ with the same set of vertices and the same
radius. We see that the three left-most balls do not have a common
intersection and therefore do not generate a 2-dimensional face in the
\v{C}ech complex. However, since all the pairwise intersections occur, the
Rips complex does include the corresponding face.}
\label{fig:cech}
\end{figure}


An important result in algebraic topology called the ``Nerve lemma'' (cf.
\cite{borsukimbedding1948}) states that the \v{C}ech complex $C(\cX
,r)$ is homotopy equivalent to the neighborhood set $U(\cX,r)$. In
particular it follows $H_*(C(\cX,r)) \cong H_*(U(\cX,r))$. As a
consequence, any statement made about the homology of $U(\cX,r)$
applies to $C(\cX,r)$ and vice versa.

Denote the \v{C}ech complex generated by the filtered point set $\cX_n^L$
as $C_L(n,r) \triangleq C(\cX_n^L,r)$. We can then define
\[
\imath_*: H_*\bigl(C_{L+\eps}(n,r)\bigr)\to H_*\bigl(C_{L-\eps}(n,r)
\bigr)
\]
to be the map induced by the inclusion map between the simplicial complexes.
Defining
\[
\hat H_*^C(L,\eps;n) \triangleq\im(\imath_*),
\]
then by the Nerve lemma, since $\hat D_{L\pm\eps}(n,r)$ and $C_{L\pm
\eps}(n, r)$ are completely equivalent structures (in terms of homology),
Theorem~\ref{teo:persistence} holds without changes for $\hat
H_*^C(L,\eps;n)$.

Next, we denote the Rips complex constructed from the filtered sample
as $R_L(n,r) \triangleq R(\cX_n^L, r)$ and define the following
inclusion map for any $\eps\in(0,L/2)$
\[
\imath:R_{L+\eps}(n,r) \hookrightarrow R_{L-\eps}(n,r). %
\]
This inclusion induces a map in homology
\[
\imath_*:H_*\bigl(R_{L+\eps}(n,r)\bigr)\to H_*\bigl(R_{L-\eps}(n,r)
\bigr),
\]
and we denote
\[
\hat H_*^R(L, \eps; n) \triangleq\im(\imath_*).
\]
Note that the Nerve lemma applies only to the \v{C}ech complex and not
the Rips.
Nevertheless, the following theorem states that we can compute the
homology of $D_L$ using the Rips complex as well. The importance of
providing a consistent estimator for $H_*(D_L)$ that uses the Rips
complex is due to its computational efficiency.


%
\begin{teo}\label{teo:persistence_rips}
Let $L > 0$ and $\eps\in(0,L/2)$ be such that the function $f(x)$ has
no critical values in the range $[L-2\eps, L+2\eps]$.
If $r\to0$ and $nr^d\to\infty$, then for $n$ large enough we have
\[
\mathbb{P} \bigl(\hat H_*^R(L,\eps; n) \cong
H_*(D_L) \bigr) \ge1 - 6ne^{-C^\star
_{\eps/2} nr^d}.
\]
In particular, if $nr^d \ge D \log n$ with $D > (C^\star_{\eps
/2})^{-1}$, then
\[
\lim_{\ninf}\mathbb{P} \bigl(\hat
H_*^R(L, \eps; n) \cong H_*(D_L) \bigr) = 1.
\]
\end{teo}


In the next subsection, we provide an algorithm for computing the image
of the inclusion map using either the \v{C}ech or Rips complex.


\subsection{Computing the homology of the image}
\label{sec:alg}


Our\vspace*{1pt} estimator for $H_k(D_L)$ requires the computation of the image of
the map between the homology of two nested simplicial complexes $\Delta
^{(1)}\subset\Delta^{(2)}$ (either \v{C}ech or Rips). This map is
denoted by $\imath_k:H_k(\Delta^{(1)}) \to H_k(\Delta^{(2)})$. In
this section, we present an algebraic algorithm to compute the rank of
this image, namely the estimated Betti number $\beta_k$. Note that
there are several efficient algorithms to compute persistent homology
that can also be used here (see \cite
{adamsjavaplex2014,edelsbrunnercomputational2010,mischaikowmorse2013}).
We present a relatively simple algorithm, in the interest of clarity for
a statistical audience, for the case where $\F$ is a field of
characteristic zero (e.g., $\R, \Q$). For a fixed homology degree
$0\le k \le d$ the algorithm will consist of two steps:
\begin{enumerate}[(2)]
\item[(1)] Finding a basis for the kernel of a square matrix defined
later as $L_k^{(1)}$.
\item[(2)] Computing the rank of two matrices, defined later as
$\partial_{k+1}^{(2)}$ and $\hat\partial_{k+1}^{(2)}$, and then we
will have that
\[
\rank\bigl(\im(\imath_k)\bigr) = \rank\bigl(\hat
\partial_{k+1}^{(2)}\bigr) - \rank \bigl(\partial_{k+1}^{(2)}
\bigr).
\]
\end{enumerate}
In the\vspace*{1pt} following, we provide more details about homology computation
for simplicial complexes, and in particular the definitions of the
matrices $L_k^{(1)}$, $\partial_{k+1}^{(2)}$, and $\hat\partial
_{k+1}^{(2)}$ mentioned above.


\subsubsection{Computing the homology of a simplicial complex}


Let $\Delta$ be a simplicial complex, let $\Delta_k$ be the set of
$k$-simplexes in $\Delta$, and let $n_k = \llvert {\Delta
_k}\rrvert $, so we can write
\[
\Delta_k = \{\sigma_1, \sigma_2,\ldots,
\sigma_{n_k} \}.
\]
We assume that every $k$-simplex $\sigma_i \in\Delta_k$ is attached
with a unique orientation (an ordering on its set of vertices), denoted
by $\sigma_i = [x_0^i,\ldots, x_k^i]$. Defining $C_k \triangleq\F
^{n_k}$, we wish to map the simplexes of $\Delta_k$ into a basis of
$C_k$ in a way that preserves orientation information. To do that we
first define $\Delta_k^\pi$ to be the set containing all the
simplexes in $\Delta_k$ in all possible orientations. We then define
the map $T_k:\Delta_k^\pi\to C_k$ in the following way. For every
simplex $\sigma_i\in\Delta_k$ we define $T_k(\sigma_i) = \be_i$,
where $\be_i$ consists of one at the $i$th entry, and zero elsewhere.
For every permutation $\pi$ on $0,\ldots,k$, we then define
\[
T_k\bigl(\bigl[x^i_{\pi(0)}, \ldots,
x^i_{\pi(k)}\bigr]\bigr) = \sign(\pi) \be_i,
\]
where $\sign(\pi) = (-1)^{P(\pi)}$, and $P(\pi)$ is the parity of
the permutation $\pi$. The vector space $C_k$ is usually referred to
as the ``space of $k$-chains'' of $\Delta$.

Next, using the map $T_k$, we define the matrix $\partial_k$ to be a
$n_{k-1}\times n_k$ matrix where the $i$th column is given by
\[
(\partial_k)_i = \mathop{\sum
_{\sigma\in\Delta_{k-1}}}_{\mathrm
{is~a~face~of~}\sigma_i} T_{k-1}(\sigma).
\]
We note that the orientation of $\sigma$ used in the sum is the one
inherited from the orientation of $\sigma_i$.
In other words, the nonzero entries in the $i$th column correspond to
the $(k-1)$-dimensional faces of $\sigma_i\in\Delta_k$ (with the
proper sign representing their orientation). The matrix $\partial_k$
can be thought of as a linear transformation from $C_k$ to $C_{k-1}$
and is referred to as ``the boundary operator.'' The $k$th homology of
$\Delta$ is then defined to be the quotient space given by
%
%
\begin{equation}
\label{eq:homology} H_k(\Delta) \triangleq\ker(\partial_k) /
\im(\partial_{k+1}).
\end{equation}
One way to find a basis for $H_k(\Delta)$ is via the combinatorial
Laplacian, defined as the following $n_k\times n_k$ matrix
\[
L_k \triangleq\partial_{k+1}\partial_{k+1}^T
+ \partial_k^T\partial_k.
\]
Note that $L_0$ is the well-known graph Laplacian.
If $\F$ is a field with characteristic zero (e.g. $\R, \Q$) then it
is shown in \cite{friedmancomputing1998} that the kernel of $L_k$ is
isomorphic to $H_k(\Delta)$ and in particular, the Betti numbers of
$\Delta$ are given by $\beta_k(\Delta) = \dim(\ker(L_k))$.


\subsubsection{The homology of the map}


Our goal is not only to compute the homology of $\Delta^{(1)}$ and
$\Delta^{(2)}$ separately, but rather to compute the image of the map
$\imath_k:H_k(\Delta^{(1)})\to H_k(\Delta^{(2)})$.
For $j=1,2$ let $\Delta_k^{(j)}$ be the set of $k$-simplexes in
$\Delta^{(j)}$, and let $n_k^{(j)} = |\Delta_k^{(j)}| $.
Since $\Delta^{(1)} \subset\Delta^{(2)}$ we can list the simplexes
in the following way:
\begin{eqnarray*}
\Delta_k^{(1)} &=& \{\sigma_1,
\sigma_2, \ldots, \sigma _{n_k^{(1)}} \},
\\
\Delta_k^{(2)} &=& \{\sigma_1,
\sigma_2, \ldots, \sigma _{n_k^{(1)}}, \sigma_{n_k^{(1)}+1},
\ldots,\sigma_{n_k^{(2)}} \}.
\end{eqnarray*}
Using this ordering on the simplexes, we define the boundary operators
$\partial_k^{(j)}$ and the combinatorial Laplacians $L_k^{(j)}$ for
each of the complexes. It is then easy to see that
%
%
\begin{equation}
\label{eq:boundary_op} \partial_k^{(2)} = \lleft( %
\begin{array} {cc} \partial_k^{(1)} & \cdots
\\
0 & \ddots \end{array} %
 \rright).
\end{equation}
Now, if $ \{v_1, \ldots, v_m \} \subset C_k^{(1)}$ is a
basis for
$\ker(L_k^{(1)})$ then it represents a basis for $H_k(\Delta^{(1)})$,
such that $\beta_k(\Delta^{(1)}) = m$. Let $\hat v_i \in C_k^{(2)}$
be a zero padded version of $v_i \in C_k^{(1)}$. From \eqref
{eq:homology}, we know that $v_i \in\ker(\partial_k^{(1)})$, and
thus from \eqref{eq:boundary_op} it is clear that $\hat v_i \in\ker
(\partial_k^{(2)})$ as well. This implies that the vectors in $ \{
\hat v_1, \ldots, \hat v_m \}$ are candidates to form a basis
for $\im
(\imath_k)$. Note, however, that while $\hat v_i \in\ker(\partial
_k^{(2)})$, it is possible that some linear combinations of $\hat v_1,
\ldots, \hat v_m$ are in $\im(\partial_{k+1}^{(2)})$, which means
that they are considered as trivial in $H_k(\Delta^{(2)})$. This means
that $ \{\hat v_1,\ldots,\hat v_m \}$ might be larger than a
basis for
$\im(\imath_k)$, and we need to reduce this set. This can be done by
solving several sets of linear equations, which we avoid describing
here. However, the rank of $\im(\imath_k)$ can be computed easily by
\[
\rank\bigl(\im(\imath_k)\bigr) = \rank\bigl(\hat
\partial_{k+1}^{(2)}\bigr) - \rank \bigl(\partial_{k+1}^{(2)}
\bigr),
\]
where
\[
\hat\partial_{k+1}^{(2)} = \bigl(\partial_{k+1}^{(2)}
, \hat v_1, \ldots, \hat v_m\bigr)
\]
is a $n_k^{(2)}\times{(n_{k+1}^{(2)} + m)}$ matrix we get by
concatenating the boundary matrix $\partial_{k+1}^{(2)}$ with the
column vectors $\hat v_i$. In other words, we measure how many vectors
from the set $ \{\hat v_1,\ldots, \hat v_m \}$ can be added
to the
set of columns vectors of $\partial_{k+1}^{(2)}$ without generating
linear dependency.


\section{Results on simulated data}\label{simulations}


In this section, we illustrate how we can use the methods in
Section~\ref{sec:results} for data analysis using some simulated examples.
The examples we chose relate to classical problems in statistics:
classification, non-parametric regression, and clustering.
We use these examples to demonstrate the novelty and strength of the
methods proposed in this paper.


\subsection{Binary regression}


We illustrate how we can recover the homology of a classification
function. The marginal density of the explanatory variables
is uniform in the unit square $X \sim U ([-\frac{1}{2}, \frac
{1}{2}]^2 )$. We then set the conditional probability of the
binary response $Y$ as
%
%
\begin{equation}
\label{eq:ex_mu} \mathbb{P} (Y = 1\given X=x ) = f(x) \triangleq C\bigl(1+\sin
\bigl(4\pi\llVert x\rrVert ^2\bigr)\bigr) e^{-100(\llVert  x\rrVert -1/4)^2},
\end{equation}
where $C$ is a normalization factor guaranteeing that $f(x)$ is indeed
a conditional probability. The graph of this conditional probability
is given in Figure~\ref{fig:mu}.


%
\begin{figure}[t]

\includegraphics{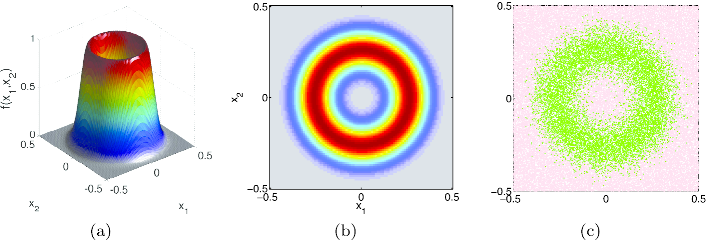}

\caption{(a) The graph of the conditional probability on the unit
square. (b) The level sets of the image of the conditional probability.
(c) For a set of points drawn from the marginal distribution on the
unit square we label them red or green based on the conditional
probability given by \protect\eqref{eq:ex_mu}. The green points are
those assigned to a response of $one$ and the red points are those
assigned zeros.}\vspace*{6pt}\label{fig:mu}
\end{figure}



We generate $\iid$ observations $\{(X_1,Y_1), \ldots, (X_n, Y_n)\}$
from the joint distribution and our objective is to recover the
topology of the level set $D_L$ for $L = 0.5$ which is used as the
binary classifier in this case, and has the shape of an annulus. We use
the Rips construction presented in Theorem~\ref{teo:persistence_rips},
with $n={}$50,000, $r = 0.01$, and $\eps= 0.2$. This gives us two
complexes: $S_1 = R_{0.3}(n,r)$ and $S_2 = R_{0.7}(n,r)$. Figure~\ref
{fig:filtering} shows the sets of disks used to create the two Rips
complexes. The light blue disks are the ones corresponding to $S_1$ and
the orange ones corresponds to $S_2$. Computing the Betti numbers yields: 

\begin{center}\vspace*{9pt}
\begin{tabular*}{200pt}{@{\extracolsep{\fill}}@{}lccc@{}}
\hline
&$\bolds{S_1}$ & $\bolds{S_2}$ & $\bolds{S_1\hookrightarrow S_2}$\\
\hline
$\beta_0$ & 34 & 53 & 1 \\
$\beta_1$ & 23 & 49 & 1 \\
\hline
\end{tabular*}\vspace*{9pt}
\end{center}

Indeed, while the homology of each of the complexes $S_1,S_2$ is
extremely noisy, the image of the map between them looks exactly like
an annulus.

\subsection{Kernel regression}


In this example, we consider a regression function on the unit square
$f:[-1,1]^2\to\R$ with additive noise
%
%
\begin{equation}
\label{eq:reg_additive} Y_i = f(X_i) + \xi_i.
\end{equation}
Our objective will be to recover the barcode or persistent homology of
the above function from noisy observations.

%
\begin{figure}[t]

\includegraphics{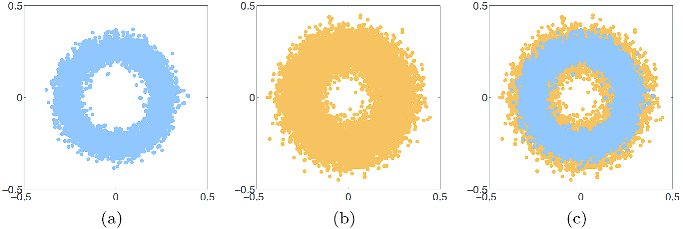}

\caption{Computing the homology of a level set for a regression
function. We generated $\{(X_i,Y_i)\}_{i=1}^{50,000}$ $\iid$
observations from the marginal and conditional distributions given in
equation \protect\eqref{eq:ex_mu}. For $L=0.5$ and $\eps=0.2$ we
present the following: (a) the set $\hat D_{L+\eps}(n,r)$, (b) the set
$\hat D_{L-\eps}(n,r)$, (c) the two sets combined. Note that both
individual sets in (a) and (b) contain many connected components and
cycles. However, in (c) we observe that most of these homological
features do not survive the transition. All the extra connected
components in (a) are merged into the large component in (b).
Similarly, all the extra cycles in (a) are filled up in (b).}
\label{fig:filtering}
\end{figure}


The regression function $f$ was generated from a random mixture of
Gaussians, and its graph is presented in Figure~\ref{fig:pd_reg}(a).
The ``true'' barcode of the function $f$ is presented in Figure~\ref
{fig:reg_barcode}(a).\vadjust{\goodbreak} This barcode was computed by evaluating $f$
directly on a dense grid and computing the persistent homology of this
discretized version. The independent variables $X_i$ are generated from
a uniform distribution in the box $[-1,1]^2$. The noise $\xi_i$ is
independent of $X_i$, and generated by a normal distribution with
$\sigma= 0.2$ truncated at $5\sigma$ (we require in (C2) for the
response variables to be bounded).
To estimate this barcode, we used $\hPH_*^\eps(f)$ (defined in
Section~\ref{sec:persistent_homology}) with $n=5000, r = 0.1, \eps=
0.001$. The result is presented in Figure~\ref{fig:reg_barcode}(b).


%
\begin{figure}

\includegraphics{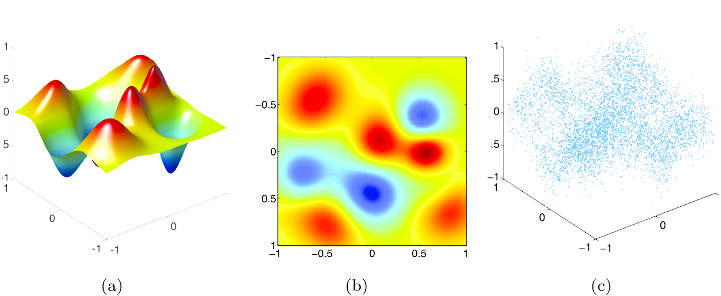}

\caption{A regression function in $\R^2$. (a) The graph of the
function in the box $[-1,1]^2$. (b) The level sets of the function. It
is easy to spot five peaks and three valleys in this image, which in
persistent homology correspond to five features in $\PH_0$ and three
in $\PH_1$.
(c) Generating $\{(X_i,Y_i)\}_{i=1}^{5000}$ $\iid$ observations from
the model presented in \protect\eqref{eq:reg_additive}.}
\label{fig:pd_reg}
\end{figure}


%
\begin{figure}[b]

\includegraphics{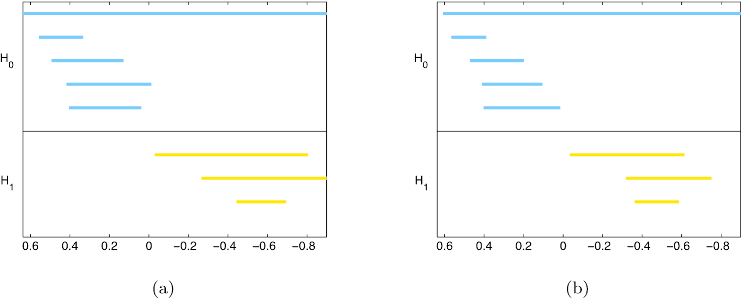}

\caption{(a) The ``true'' barcode of the persistent homology of the
regression function $f$ presented in Figure~\protect\ref{fig:pd_reg}.
(b) The estimated persistent homology $\hPH_*^\eps(f)$, with $n=5000,
r = 0.1, \eps= 0.001$, is very close to the true barcode. For
visualization purposes, we left bars with length less than $0.05$ out
of the figure. In both the true and the estimated barcodes we observe
five significant features in $H_0$ and three in $H_1$, corresponding to
the five peaks and three valleys in the graph of the function $f$.}
\label{fig:reg_barcode}
\end{figure}



%
\begin{figure}

\includegraphics{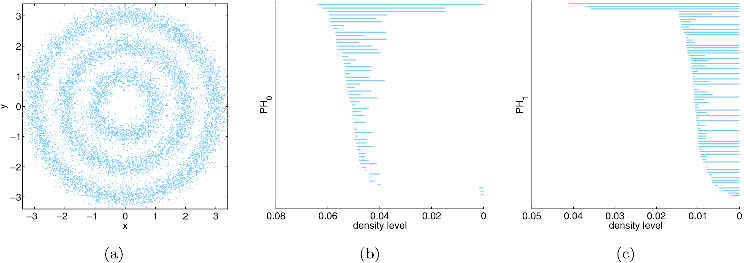}

\caption{(a) A sample set generated from three concentric circles. (b)
The barcode for $\hPH_0^\eps(f)$, where we indeed observe three
dominating components. (c) The barcode for $\hPH_1^\eps(f)$, where we
indeed observe three dominating cycles. The parameters used in this
simulation are $n={}$10,000, $r = 0.125$, $\eps= 0.005$.}
\label{fig:3rings}
\end{figure}


\subsection{Dataset related to spectral clustering}


Spectral clustering uses spectral graph theory to cluster observations
(see the review papers in \cite
{ngspectral2002,vonluxburgtutorial2007}). It is mostly useful in
cases where the clusters are not necessarily concentrated close to a
single point, but have a more complicated shape (such as the data in
Figure~\ref{fig:3rings}). We revisit a simulated example from the
spectral clustering literature to illustrate how well we can recover
the number of clusters and cluster features using our level sets
approach. We generate $n={}$10,000 points from three concentric circles
(of radii $1,2,3$) and added multivariate Gaussian noise with $\sigma=
0.2$. The result is presented in Figure~\ref{fig:3rings}(a). The
topological features we wish to recover here are the three connected
components and the three cycles (spectral clustering would find the
three connected components). The parameters we used are $r=0.125, \eps
= 0.005$. Figure~\ref{fig:3rings}(b) displays $\hPH^\eps_0(f)$. Here
we see that there are indeed three dominating features (bars that
persist over a long period of time). The rest of the features are
generated by the fluctuations in the estimated density function.
Similarly, in Figure~\ref{fig:3rings}(c) we observe three dominating
features as well, representing the three cycles in the data.


%
\begin{figure} 

\includegraphics{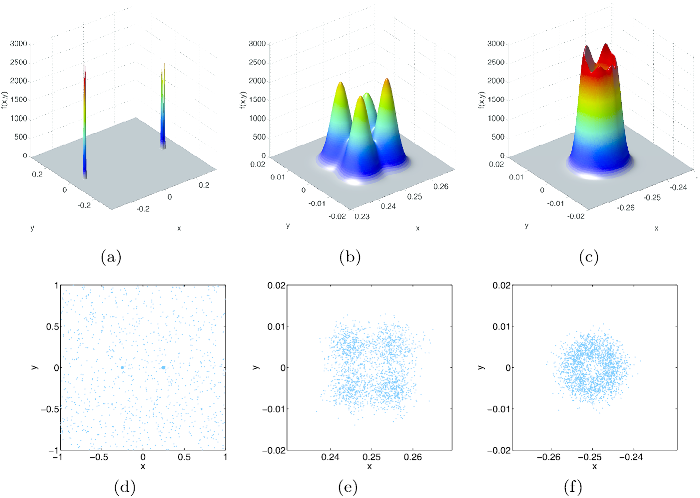}

\caption{A hierarchical density function. (a) The density function at a
coarse level, consisting of two sharp peaks. (b) Zooming in on the
density around $(0.25,0)$ we observe that this sharp peak actually
consists of four adjacent peaks. (c) Zooming in on the density around
$(-0.25,0)$ we observe that the peak has a crater-like structure with
small fluctuation around the rim. (d)--(f) A sample of $n=5000$ points
generated by $f$.}
\label{fig:density}
\end{figure}


\subsection{Hierarchical clustering}


This example will be used to show how using our method we can capture
features of a density function with hierarchical structure. Consider a
probability density $f$ on $\R^2$ that consists of two concentrated
densities that are far apart and centered at $(\pm0.25, 0)$, see
Figure~\ref{fig:density}(a). Once we zoom into the two densities we
realize there is a finer structure in this problem. The density around
$(0.25, 0)$ is a mixture of four Gaussians that are very near each
other, see Figure~\ref{fig:density}(b). The density around $(-0.25,
0)$ is one density that looks like a volcano crater (made of a mixture
of $100$ Gaussians), see Figure~\ref{fig:density}(c). The result of
this finer structure is that when we examine the persistence homology
of $f$ we expect to see: (1) five dominating features in $\PH_0$ -- the
four bumps on the right, and the entire volcano on the left, (2) two
dominating features in $\PH_1$ -- one coming from the cycle along the
rim of the volcano, and another one from the cycle that surrounds the
four bumps, (3) fluctuations on the rim will introduce features in
$\PH_0(f)$ but these will have low persistence. We will show how we
can accurately capture the homology of this hierarchical structure.

The barcode in Figure~\ref{fig:pd_multi_res}(a) displays the ``true''
persistent homology $\PH_*(f)$ that was computed by evaluating the
function values directly on a very fine grid around the peaks. Looking
at the barcode of $\PH_0(f)$, we see two dominant features, with death
time close to zero. These two features correspond to the two clusters
represented by the peaks seen in Figure~\ref{fig:density}(a). The
other three dominant features correspond to the three additional peaks
we have in Figure~\ref{fig:density}(b). The rest of the bars (as well
as other shorter bars we kept out of the figure for visualization
purposes) correspond to the fluctuation along the rim of the crater in
Figure~\ref{fig:density}(c). In $\PH_1(f)$, we see exactly two
features corresponding to the two cycles described above.

We can compare the true barcode to the barcode generated by our
estimator for $\PH_*(f)$ using $\hPH_*^\eps(f)$. The parameters\vspace*{1pt} we
used in the estimator are $n=5000$, $r=0.001$, $\eps=3.5$. The barcode
for $\hPH_*^\eps(f)$ is presented in Figure~\ref
{fig:pd_multi_res}(b). The global picture is very consistent with that
of the true function. As expected our estimates have extra variation in
the endpoints of the bars.

%
\begin{figure}

\includegraphics{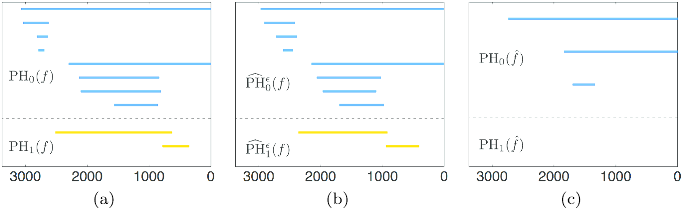}

\caption{Estimating the persistent homology of the density function
$f$ presented in Figure~\protect\ref{fig:density}.
(a) The ``true'' barcode for the function $f$, that is, $\PH_*(f)$
(computed by sampling the
density function on a fine grid). (b) The barcode computed from the
estimator $\hPH_*^\eps(f)$.
The parameters used are $n=5000$, $r=0.001$, $\eps=3.5$. (c) The barcode
computed for the
kernel density estimator -- $\PH_*(\hat f)$. The kernel parameters are
the same as for
$\hPH_*^\eps(f)$, the grid size taken is $500\times500$. Note that
the estimator
$\hPH_*^\eps(f)$ gives a result that is very similar to the true
barcode. In both
cases there are five significant features in $H_0$ and two significant features
in $H_1$. The barcode for $\PH_*(\hat f)$ only recover the coarse
features, namely the two clusters, but completely ignores the finer
structures. We note that for visualization purposes we filtered out the
very small bars before drawing the barcodes here.}
\label{fig:pd_multi_res}
\end{figure}


In Fasy \textit{et al}. \cite{fasyconfidence2014}, an alternate approach is
developed to estimate $\PH_*(f)$. Their idea is to use a kernel
density estimation to obtain an estimate $\hat{f}_n$ of the density
$f$. Then they compute the persistent homology of $\hat{f}_n$, denoted
by $\PH_*(\hat{f}_n)$. They are able to provide a theoretical bound
on the bottleneck distance between $\PH_*(f)$ and $\PH_*(\hat
{f}_n)$. This result is similar in spirit to Theorem~\ref
{teo:ph_consistency} in our paper. The main difference in their method
versus our method is that they focus on getting a good\vspace*{1pt} estimate of the
function values or ensuring $\hat{f}_n \approx f(x)$ everywhere,
whereas we compute $\hPH_*^\eps(f)$ by approximating the level sets directly.

In the case of a density function with hierarchical structure, these
two approaches often have different empirical performance. In
particular, we argue that the estimator $\hPH_*^\eps(f)$ is favorable
to $\PH_*(\hat f)$. The crux of the argument in favor of computing
$\hPH_*^\eps(f)$
is that in evaluating the fit of $\hat f$ there is a resolution
parameter of how fine in $\R^2$ one measures $f$, which we denote as~$\Delta$
(in addition to the bandwidth parameter of the kernel -- $r$).
The problem arises in that one needs to know what value of $\Delta$ is
small enough to capture fine structure in $f$. This raises two issues:
(1)~how to adaptively estimate $\Delta$ from data and (2) taking a finer
resolution parameter will result in an increase in the sample
complexity of the inference problem. Our approach of directly
estimating $\hPH_*^\eps(f)$ avoids these difficulties, since we only
work with the original sample points rather than $\hat f$. In
Figure~\ref{fig:pd_multi_res}(c), we present the barcode for $\PH
_*(\hat f)$, computed using the same kernel, on a grid of size
$500\times500$ (i.e.,~$\Delta= 1/250$).


\section{Conclusion}


In this paper, we introduce a consistent estimator for the homology of
level sets for both density and regression functions. We
apply this procedure to infer the homology of a manifold from noisy
observations, and infer the persistent homology of either density or
regression functions. The conditions we require are weaker than
previous results in this
direction. 

We view this work as an important step in closing the gap between
topological data analysis and statistics. For topological data
analysis, we provide a consistent estimator for the homology and
persistent homology of spaces underlying random data. As future work,
we will consider refinements of our analysis to obtain convergence
rates and confidence intervals of the estimates. We suspect this will
require more assumptions on the geometry of the underlying spaces. From
a statistical perspective, this work suggests that topological
summaries of density and regression functions are of interest and
provide insights in statistical modeling. We suspect these
characteristics or topological summaries will be very useful in
classification or hypothesis testing problems, when the assumptions on
different decision regions can be naturally captured by coarse geometry
or topology.


\begin{appendix}\label{sec:proofs}\label{append}
\section*{Appendix: Proofs}


In this section we provide the proofs for Theorems
\ref{teo:persistence},~\ref{teo:manires},~\ref{teo:ph_consistency},
and~\ref{teo:persistence_rips}.


\subsection{Some definitions and lemmas}


Recall that
\[
\cX_n^L \triangleq \bigl\{X_i: \hat
f_n(X_i) \ge L; 1\le i \le n \bigr\}. %
\]
Our first\vspace*{1pt} step would be to assign some probabilistic quantification of
the accuracy of the assignments $\cX_n^L$ with respect to $D_L$. We
will do
this by first defining two sets: the set $D^{\uparrow}_{L,r}$
corresponds to
``inflating'' $D_L$ by a radius $r$ and $D^{\downarrow}_{L,r}$
corresponds to
``deflating'' $D_L$ by a radius $r$. To define these sets, we first
define the tube of radius $r$ around the boundary of $D_L$
\[
\partial D_{L}^{r} = \bigcup
_{x\in\partial D_L} B_r(x), \qquad \partial D_L\mbox{ is the boundary of } D_L.
\]
We then define $D^{\uparrow}_{L,r}$ and $D^{\downarrow}_{L,r}$ as follows
\[
D^{\uparrow}_{L}(r) = D_L \cup\partial
D_{L}^r, \qquad D^{\downarrow
}_L(r) =
D_L\setminus\partial D_{L}^{r}.
\]

Using these definitions the following lemma provides a bound on the
false positive and false negative error of the set $\cX_n^L$
with respect to $D_L$.


%
\begin{lem} \label{lem:prob_filtering} Assume that constraint \textup{(C1)} on
the kernel function holds and either condition \textup{(C2)} holds for the
regression case or in the density estimation case the density is
bounded and tame. For every $L>0$, and $\eps\in(0,L)$, if $r\to0$
and $nr^d\to\infty$, then there exists a constant $C^\star_\eps$
such that for $n$ large enough we have
%
%
\begin{equation}
\label{eq:prob_filtering_lower} \mathbb{P} \bigl(\exists X_i \notin
D^{\uparrow}_{L-\eps}(r): \hat f_n(X_i) \ge L
\bigr) \le n e^{- C^\star_\eps nr^d},
\end{equation}
and
%
%
\begin{equation}
\label{eq:prob_filtering_upper} \mathbb{P} \bigl(\exists X_i \in
D^{\downarrow}_{L+\eps}(r): \hat f_n(X_i) \le L
\bigr) \le n e^{-C^\star_\eps nr^d}.
\end{equation}
\end{lem}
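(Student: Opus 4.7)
The plan is to apply a union bound over the $n$ data points and, for each $i$, condition on $X_i$ and apply Bernstein's inequality to the conditional distribution of $\hat f_n(X_i)$, which is a sum of iid contributions from the other $n-1$ samples. The bounded-support condition on $K$ means $\hat f_n(X_i)$ depends only on samples lying in $B_r(X_i)$, so the event on which the geometric deviation occurs (e.g.\ $X_i \notin \hD_{L-\eps}(r)$) translates into a deterministic bound on $f$ throughout the support of the kernel, which in turn controls the conditional mean. The argument is nearly symmetric for the two displayed inequalities, so I will describe \eqref{eq:prob_filtering_lower} in detail.

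\textbf{Geometric input.} The key deterministic observation is that if $X_i \notin \hD_{L-\eps}(r) = D_{L-\eps} \cup \partial D_{L-\eps}^{r}$, then $X_i$ is outside $D_{L-\eps}$ and at distance strictly greater than $r$ from $\partial D_{L-\eps}$, so the ball $B_r(X_i)$ is disjoint from $D_{L-\eps}$ and $f(y) < L-\eps$ for every $y \in B_r(X_i)$. Conversely, on the event $X_i \in \vD_{L+\eps}(r)$, one has $B_r(X_i) \subset D_{L+\eps}$ and $f \geq L+\eps$ on the ball. In the density case, conditional on $X_i$, the quantity
\[
\E[K_r(X_i - X_j) \mid X_i] = \int K_r(X_i - y)\, p(y)\, dy
\]
is then bounded above by $(L-\eps) C_K r^d$ (since $\int K_r = C_K r^d$), giving $\E[\hat f_n(X_i)\mid X_i] \leq (L-\eps) + O((nr^d)^{-1})$. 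In the regression case, one rewrites $\{\hat f_n(X_i) \geq L\}$ as $\{\sum_{j}(Y_j - L) K_r(X_i - X_j) \geq 0\}$ and applies the same logic to $W_j = (Y_j-L)K_r(X_i-X_j)$, whose conditional mean is bounded above by $-\eps \fmin C_K r^d$ using (C2) and the geometric bound on $f(x) - L$.

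\textbf{Concentration step.} For \eqref{eq:prob_filtering_lower} in the density case, the gap between the conditional mean and the target $L$ is at least $\eps C_K r^d - O(n^{-1})$ per summand, i.e.\ the sum must deviate from its mean by at least $(n-1)\eps C_K r^d (1-o(1))$. Each summand is bounded in $[0,1]$ and has conditional variance at most $\fmax C_K r^d$ (using $K^2 \leq K$ and $p \leq \fmax$). Bernstein's inequality then produces a bound of the form
\[
\exp\!\left(-\frac{\tfrac{1}{2}(n\eps C_K r^d)^2}{n\fmax C_K r^d + \tfrac{1}{3}n\eps C_K r^d}\right) = \exp\!\left(-\frac{3\eps^2 C_K nr^d}{2(3\fmax + \eps)}\right),
\]
for $n$ large enough, which is dominated by $\exp(-C^\star_\eps nr^d)$ with $C^\star_\eps$ as in \eqref{eq:C_star_density}. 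Multiplying by $n$ for the union bound yields the stated inequality. The regression case follows the same template, with summand bound $Y_{\max}+L$ and variance bound $(Y_{\max}+L)^2 \fmax C_K r^d$; tracking these constants through Bernstein produces the constant in \eqref{eq:C_star_regression}. The bound \eqref{eq:prob_filtering_upper} is identical after replacing the upper bound on $f$ by the lower bound on $B_r(X_i) \subset D_{L+\eps}$.

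\textbf{Main obstacle.} The principal difficulty lies in the regression case. The Nadaraya--Watson estimator is a ratio, so one cannot directly apply concentration to $\hat f_n(X_i)$; reformulating the event as a linear condition on the $W_j$ avoids this but requires using the lower bound $\fmin$ on the marginal density to guarantee that $|\E[W_j \mid X_i]|$ is bounded away from zero rather than merely non-positive. A secondary subtlety is that $B_r(X_i)$ could extend outside $\supp(p)$, but condition (C2) (compact support with $p \ge \fmin$ on the support) combined with $r \to 0$ keeps the contribution under control for $n$ large. Finally, bookkeeping of the lower-order correction terms (the $\frac{1}{nC_Kr^d}$ from the $j=i$ term, the $(n-1)/n$ versus $1$ factor, and kernel-tail errors) is what forces the slightly conservative value of $C^\star_\eps$ and explains the ``$n$ large enough'' qualifier in the statement.
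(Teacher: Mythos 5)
Your strategy matches the paper's proof very closely: a union bound over the $n$ points, conditioning on $X_i = x$, using the compact-support kernel to translate $x\notin\hD_{L-\eps}(r)$ into the deterministic bound $f < L-\eps$ on $B_r(x)$ (hence $p_r(x)=\E[K_r(x-X_j)]\le (L-\eps)C_Kr^d$ in the density case), and then applying Bernstein's inequality to the resulting centered sum. Your density-case computation is essentially identical to the paper's and even reproduces the same arithmetic leading to $C^\star_\eps = \eps^2 C_K/(3\fmax+\eps)$.

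The one place where you diverge is the regression case, where you propose to apply Bernstein directly to the centered versions of $W_j=(Y_j-L)K_r(X_i-X_j)$. The paper instead works with $Z_j=(Y_j-f(X_j))K_r(x-X_j)-\eps(K_r(x-X_j)-p_r(x))$, splitting the fluctuation into a conditional-response-noise part and a kernel-fluctuation part; this decomposition is what yields the $L$-independent bounds $\Var(Z_j)\le(\Ymax^2+\eps^2)C_K\fmax r^d$ and $|Z_j|\le 2(\Ymax+\eps)$, and hence the constant in \eqref{eq:C_star_regression}. Your centering instead gives summand bound $\Ymax+L$ and variance bound $(\Ymax+L)^2C_K\fmax r^d$, so tracking these through Bernstein produces an $L$-dependent constant that is \emph{not} the one in \eqref{eq:C_star_regression} (your final sentence on this point overclaims). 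That difference is harmless for the lemma itself, which only asserts the existence of some $C^\star_\eps>0$, so your proof is still correct --- it is just a slightly different, arguably cleaner, choice of centering that buys simplicity at the cost of a less uniform constant. One small caveat: your remark that $r\to 0$ resolves the boundary issue in $p_r(x)\ge\fmin C_Kr^d$ is not quite right, since a point on $\partial\supp(p)$ has $B_r(x)$ sticking out by a constant fraction for every $r$; but the paper makes the same implicit assumption, so this is not a gap you introduced.
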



Equation \eqref{eq:prob_filtering_lower} bounds the probability of a
false-positive error, and equation \eqref{eq:prob_filtering_upper}
bounds the probability of a false-negative error. The value of $C^\star
_\eps$ is different for density estimation versus regression
and is given by \eqref{eq:C_star_density} and \eqref{eq:C_star_regression}.

Next, recall that
\[
\hat D_L(n,r) \triangleq U\bigl(\cX_n^L, r
\bigr).
\]
We would like to prove that with a high probability this empirical set
is sandwiched by two sets which should be ``close'' to $D_L$. The
following lemma states the precise result.


%
\begin{lem}\label{lem:set_bounds}
For every $L>0$, and $\eps\in(0,L)$, if $r\to0$ and $nr^d\to\infty
$, then for large enough $n$ we have
\[
\mathbb{P} \bigl(D^{\downarrow}_{L+\eps}(2r) \subset\hat
D_L(n,r) \subset D^{\uparrow}_{L-\eps }(2r) \bigr) \ge1 - 3
ne^{-C^\star
_\eps nr^d},
\]
\end{lem}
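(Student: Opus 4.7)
The plan is to intersect the two events supplied by Lemma \ref{lem:prob_filtering} with a third ``no empty ball'' event and verify that both inclusions hold deterministically on the triple intersection. Let $E_1$ denote the complement of the event in \eqref{eq:prob_filtering_lower} (every $X_i \in \cX_n^L$ actually lies in $\hD_{L-\eps}(r)$), let $E_2$ denote the complement of \eqref{eq:prob_filtering_upper} (every $X_i \in \vD_{L+\eps}(r)$ satisfies $\hat f_n(X_i) > L$, hence $X_i \in \cX_n^L$), and let $E_3$ be the event that every $y \in \vD_{L+\eps}(2r)$ has at least one sample in $B_r(y)$. Lemma \ref{lem:prob_filtering} bounds $\prob{E_1^c}$ and $\prob{E_2^c}$ each by $n e^{-C^\star_\eps n r^d}$, so the whole argument reduces to (a) bounding $\prob{E_3^c}$ by the same order and (b) checking the two inclusions pointwise on $E_1 \cap E_2 \cap E_3$.

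Both inclusions are one-line triangle-inequality checks. For the upper one, any $y \in \hat D_L(n,r)$ satisfies $y \in B_r(X_i)$ for some $X_i \in \cX_n^L$; on $E_1$ this $X_i$ is within distance $r$ of $D_{L-\eps}$, so $y$ is within distance $2r$ of $D_{L-\eps}$, giving $y \in \hD_{L-\eps}(2r)$. For the lower one, fix $y \in \vD_{L+\eps}(2r)$. Because $y$ is at distance greater than $2r$ from $\partial D_{L+\eps}$, every point of $B_r(y)$ is at distance greater than $r$ from that boundary and, by closedness of $D_{L+\eps}$, stays inside $D_{L+\eps}$; hence $B_r(y) \subset \vD_{L+\eps}(r)$. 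On $E_3$ there is some $X_i \in B_r(y)$, and $E_2$ then forces $X_i \in \cX_n^L$, so $y \in B_r(X_i) \subset \hat D_L(n,r)$.

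For the covering event $E_3$, I would take a minimal $r/2$-net $\{y_1,\ldots,y_M\}$ of $\vD_{L+\eps}(2r)$. Boundedness of $D_{L+\eps}$ (assumed in the density case, automatic in the regression case since $\supp(p)$ is compact) yields the usual volume bound $M = O(r^{-d})$. If every $B_{r/2}(y_j)$ contains a sample, then any $y \in \vD_{L+\eps}(2r)$ is within $r/2$ of some $y_j$, and the sample in $B_{r/2}(y_j)$ automatically lies in $B_r(y)$, so $E_3$ holds. Each $B_{r/2}(y_j)$ is contained in $D_{L+\eps}$, so the probability a single observation lands in it is at least $c r^d$, with $c = (L+\eps)\omega_d/2^d$ in the density case and $c = \fmin \omega_d / 2^d$ in the regression case. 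A union bound yields $\prob{E_3^c} \le M(1 - c r^d)^n \le O(r^{-d}) e^{-c n r^d}$, which is at most $n e^{-C^\star_\eps n r^d}$ for $n$ large, since $n r^d \to \infty$ forces $r^{-d} = o(n)$, after shrinking $C^\star_\eps$ below $c$ if necessary.

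The main obstacle, such as it is, is only the final exponent-matching step: the polynomial covering factor $M = O(r^{-d})$ must be absorbed into the leading $n$ via $n r^d \to \infty$, and the geometric constant $c$ coming from the covering must dominate the constant $C^\star_\eps$ inherited from the pointwise estimator bounds; both are automatic from the hypotheses. Notably, no reach or feature-size assumption on $D_L$ is needed, which is what makes this lemma a clean set-level bridge from the pointwise estimator guarantees of Lemma \ref{lem:prob_filtering} to the homology-preserving inclusion sequence \eqref{eq:level_inclusion} used in Theorem \ref{thm:persistence}.
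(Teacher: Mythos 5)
Your proof is correct and matches the paper's route almost exactly: the paper's Lemma \ref{lem:prob_coverage} internally performs the same split of the lower-inclusion failure into a raw-coverage event (your $E_3$) plus a false-negative filtering event (your $E_2$), while your $E_1$ handles the upper inclusion just as in \eqref{eq:prob_up_bound}, and the union bound over the three pieces gives the factor $3$. The only cosmetic difference is your fixed $r/2$-net versus the paper's $\delta r$-net for arbitrary $\delta$; both rely, as you note, on $r^{-d}=o(n)$ and on the covering exponent dominating $C^\star_\eps$, a comparison the paper likewise asserts without detailed justification.
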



In other words, our estimator $\hat D_L(n,r)$ is sandwiched between the
two non-random approximations of $D_L$.

The last ingredient we need for the proving the theorems is the
following purely algebraic lemma.


%
\begin{lem}\label{lem:algebra}
Consider the following commutative diagram of groups,
\[
\mbox{
\includegraphics{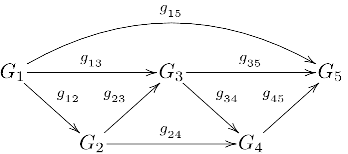}
}
\]
(by ``commutative'' we mean that all paths with the same endpoints lead
to the same result),
and for every $i,j$ define $G_{ij} = \im(g_{{ij}}) \subset G_j$.

If $g_{{35}}:G_3\to G_5$ is an isomorphism from $G_3$ to $G_{15}$.
Then the map $g_{{34}}:G_3\to G_4$ is an isomorphism from $G_3$ to
$G_{24}\subset G_4$. In particular, we have $G_3\cong G_{24}$.
\end{lem}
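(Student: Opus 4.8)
The plan is to exploit commutativity of the diagram together with the hypothesis that $g_{35}$ restricts to an isomorphism $G_3 \to G_{15}$ in order to squeeze the desired statement about $g_{34}$ out of an injectivity-plus-surjectivity argument. First I would observe that commutativity gives two key factorizations: $g_{15} = g_{45}\circ g_{34}\circ g_{13}$ (the long path through $G_4$ equals the top arc), and $g_{35} = g_{45}\circ g_{34}$ (the triangle $G_3 \to G_4 \to G_5$). The first identity tells us $G_{15} = \im(g_{15}) \subset \im(g_{45}\circ g_{34}) = \im(g_{35}) = G_{35}$, and since $g_{35}$ is assumed to be an isomorphism \emph{from $G_3$ onto $G_{15}$}, we must in fact have $G_{35} = G_{15}$ and $g_{35}:G_3\to G_5$ is injective.

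Injectivity of $g_{35} = g_{45}\circ g_{34}$ immediately forces $g_{34}:G_3\to G_4$ to be injective. This already shows $g_{34}$ is an isomorphism from $G_3$ onto its image $\im(g_{34})$; the content of the lemma is the identification of that image with $G_{24} = \im(g_{24})$. For one inclusion, commutativity of the lower triangle gives $g_{24} = g_{34}\circ g_{23}$, hence $G_{24} = \im(g_{34}\circ g_{23}) \subset \im(g_{34})$. For the reverse inclusion I need to show every element of $\im(g_{34})$ actually comes from $G_2$ via $g_{24}$, equivalently $\im(g_{34}) \subset \im(g_{24})$. Take $y = g_{34}(x)$ with $x\in G_3$. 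Apply $g_{45}$: then $g_{45}(y) = g_{35}(x) \in G_{35} = G_{15} = \im(g_{15}) = \im(g_{45}\circ g_{34}\circ g_{13})$, so there is $z\in G_1$ with $g_{45}(y) = g_{45}(g_{34}(g_{13}(z)))$. Since $g_{45}\circ g_{34} = g_{35}$ is injective on $G_3$, and both $x$ and $g_{13}(z)$ lie in $G_3$, we conclude $x = g_{13}(z)$. But then $y = g_{34}(g_{13}(z)) = g_{24}(g_{12}(z))$ by commutativity (the path $G_1\to G_3\to G_4$ equals $G_1\to G_2\to G_4$), so $y\in \im(g_{24}) = G_{24}$, as desired.

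Combining the two inclusions gives $\im(g_{34}) = G_{24}$, and together with the injectivity established above, $g_{34}$ is an isomorphism from $G_3$ onto $G_{24}$; in particular $G_3 \cong G_{24}$. The main obstacle here is really just bookkeeping: the argument hinges on correctly reading off which composites are equal from the commutative diagram (in particular the equality of the two paths $G_1\to G_4$, and that $g_{35}$ factors as $g_{45}\circ g_{34}$), and on using the precise form of the hypothesis — that $g_{35}$ is an isomorphism onto $G_{15}$ specifically, not merely injective — to get the crucial equality $G_{35} = G_{15}$ that powers the surjectivity half of the argument. There is no analytic difficulty; it is a pure diagram chase.
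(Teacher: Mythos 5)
Your proof is correct and rests on the same two pillars as the paper's: injectivity of $g_{34}$ is forced by the factorization $g_{35}=g_{45}\circ g_{34}$ together with injectivity of $g_{35}$, and the identification of $\im(g_{34})$ with $G_{24}$ ultimately traces back to surjectivity of $g_{13}$, which follows from $g_{15}=g_{35}\circ g_{13}$ having image $G_{15}$ and $g_{35}$ being an isomorphism onto $G_{15}$. The organization differs mildly: the paper derives surjectivity of $g_{23}$ from surjectivity of $g_{13}$ (via $g_{13}=g_{23}\circ g_{12}$) and then obtains $\im(g_{34})=G_{24}$ in one stroke from $g_{24}=g_{34}\circ g_{23}$, whereas you prove the two inclusions separately, routing the nontrivial one through $G_1$ via $g_{34}\circ g_{13}=g_{24}\circ g_{12}$; your element chase for $\im(g_{34})\subset G_{24}$ in effect re-derives surjectivity of $g_{13}$ on the fly. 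Both routes invoke exactly the same hypotheses and commute the same triangles, so there is no substantive gain either way — the paper's framing as a cascade of surjectivity statements is marginally more modular, while yours is a more direct element-wise chase. One small redundancy: your opening step deriving $G_{15}\subset G_{35}$ and then upgrading it to equality is unnecessary, since the hypothesis that $g_{35}$ is an isomorphism from $G_3$ onto $G_{15}$ already asserts $\im(g_{35})=G_{35}=G_{15}$ outright.
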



\subsection{Proving the theorems}


\begin{pf*}{Proof of Theorem~\ref{teo:persistence}}
Using Lemma~\ref{lem:set_bounds} for $\hat D_{L+\eps}(n,r)$ and $\hat
D_{L-\eps}(n,r)$ we have that for $n$ large enough
%
%
\begin{eqnarray}
\label{eq:persistence_inclusion} %
\mathbb{P} \bigl(D^{\downarrow}_{L + \sklfrac{3}{2}\eps}(2r)
\subset \hat D_{L+\eps}(n,r) \subset D^{\uparrow}_{L+\sklfrac{1}{2}\eps}(2r)
\bigr) \ge1 - 3ne^{-C^\star_{\eps
/2} nr^d},
\nonumber\\[-8pt]\\[-8pt]\nonumber
\mathbb{P} \bigl(D^{\downarrow}_{L - \frac{1}{2}\eps}(2r)\subset \hat
D_{L-\eps}(n,r) \subset D^{\uparrow}_{L-\sklfrac{3}{2}\eps}(2r) \bigr) \ge1 -
3ne^{-C^\star_{\eps
/2} nr^d}.
\end{eqnarray}
Since we assume $L$ is $\eps$-regular, if $r$ is small enough, we have
\[
D_{L+2\eps} \subset D^{\downarrow}_{L+\sklfrac{3}{2}\eps}(2r) \subset
D^{\uparrow} _{L+\sklfrac{1}{2}\eps}(2r) \subset D_L \subset
D^{\downarrow}_{L-\sklfrac
{1}{2}\eps
}(2r) \subset D^{\uparrow}_{L-\sklfrac{3}{2}\eps}(2r)
\subset D_{L-2\eps},
\]
and from \eqref{eq:persistence_inclusion} we conclude that
\[
\mathbb{P} \bigl(D_{L+2\eps} \subset\hat D_{L+\eps}(n,r)
\subset D_L \subset \hat D_{L-\eps}(n,r) \subset
D_{L-2\eps} \bigr) \ge1- 6ne^{-C^\star_{\eps
/2}nr^d}.
\]
Denote
\[
S_1 = D_{L+2\eps},\qquad
S_2= \hat D_{L+\eps}(n,r),\qquad
S_3 =D_L,\qquad
S_4 = \hat D_{L-\eps}(n,r),\qquad
S_5= D_{L-2\eps},
\]
and let $G_i = H_*(S_i)$.
Since we assume that $f(x)$ has no critical values in $[L-2\eps,
L+2\eps]$,
and using the notation of Lemma~\ref{lem:algebra} we have that the
maps $g_{{13}}, g_{{35}}$ and $g_{{15}}$ induced by the inclusions
$S_1\subset S_3 \subset S_5$ are all isomorphisms.
If, in addition, $S_1\subset S_2\subset S_3 \subset S_4 \subset S_5$,
then using Lemma~\ref{lem:algebra} we conclude that $G_{24} \cong
G_3$. Observing that $G_{24} = \im(\imath_*)$ (see \eqref
{eq:i_star}) we have that $\im(\imath_*) \cong H_*(D_L)$ which
completes the proof.
\end{pf*}


\begin{pf*}{Proof of Theorem~\ref{teo:manires}}
Recall that $N_\eps= \sup_{x\in\R^d}  \lceil{f(x)/2\eps
} \rceil$, and
$L_{\max} = 2\eps N_\eps$.
Let $E$ be the event that for every $1\le i \le N_\eps$ the following
inclusion holds:
%
%
\begin{equation}
\label{eq:inc_L_i} D_{L_{i-1}} = D_{L_i+2\eps} \hookrightarrow\hat
D_{L_i+\eps}(n,r) \hookrightarrow D_{L_i} \hookrightarrow\hat
D_{L_i-\eps}(n,r) \hookrightarrow D_{L_i-2\eps}= D_{L_{i+1}}.
\end{equation}
Applying\vspace*{1pt} Lemma~\ref{lem:set_bounds} (as in the proof of Theorem~\ref
{teo:persistence}) $N_\eps$ times we can show that if $r\to0$ and
$nr^d\to\infty$ then for $n$ large enough
\[
\mathbb{P} (E ) \ge1- 3n N_\eps e^{-C^\star_{\eps/2} nr^d}.
\]
From here on we will assume that \eqref{eq:inc_L_i} is true for all
$1\le i \le N_\eps$.
Choosing $i^\star$ as
\[
i^\star\triangleq1+\min \bigl\{i\in \{1,\ldots,N_\eps \}:\hat
\beta_m(L_i,\eps;n) = 1 \bigr\},
\]
our goal is to show that $[L_{i^\star}-2\eps, L_{i^\star}+2\eps]
\subset(A,B)$, and therefore the arguments used in the proof of
Theorem~\ref{teo:persistence} guarantee that $\hat H_*(L_{i^\star
},\eps;n) \cong H_*( D_{L_{i^\star}}) \cong H_*(\cM)$.

Since $\cM$ is assumed to be connected, we have that $\beta_0(\cM
)=1$, and by Poincar\'{e} duality (cf. \cite
{hatcheralgebraic2002,munkreselements1984}) we conclude that $\beta
_m(\cM) = 1$. If $L_i\in(A,B)$ then from Definition~\ref
{def:noisy_dens} we have that $D_{L_i}\simeq\cM$ and thus $\beta
_m(D_{L_i}) = 1$ as well.
On the other hand, if $L_i > B$ then $D_{L_i}\simeq\cM'$ where $\cM
'$ is a compact locally contractible proper subset of the $\cM$.
Using Proposition 3.46 in \cite{hatcheralgebraic2002} we have that
$\beta_m(\cM') = \beta_m(D_{L_i}) = 0$.

Our requirement that $L_{i-1}-L_i = 2\eps$ and $B-A \ge8\eps$
guarantees that there are at least four consecutive levels $L_i$ such
that $L_i\in(A,B)$. Let $L_{i_1} > L_{i_2} > L_{i_3} >L_{i_4}$ be the
first (highest) such levels. For $k=2,3$ we have that $[L_{i_k}-2\eps,
L_{i_k}+2\eps] \subset(A,B)$, and from the proof of Theorem~\ref
{teo:persistence} and the previous paragraph we conclude that $\hat
\beta_m(L_{i_k},\eps;n) = 1$. For $i_1$ however, it is not true that
$[L_{i_1}-2\eps, L_{i_1}+2\eps] \subset(A,B)$ and therefore, $\hat
\beta_m(L_{i_1},\eps;n)$ might be either zero or one.
Finally, defining $i^\star$ the way we did, $i^\star$ might be either
$i_2$ or $i_3$. In both cases we have $[L_{i^\star}-2\eps, L_{i^\star
}+2\eps] \subset(A,B)$, and that completes the proof.
\end{pf*}


\begin{pf*}{Proof of Theorem~\ref{teo:ph_consistency}}
Recall that $\cD=  \{D_L \}_{L\in\R}$ is the continuous filtration
of the (super) level sets of $f$, and $\hat{\cD}^\eps= \{ \hat
D_{L_i}(n,r)\}_{i\in\Z}$ is a discrete approximation.\vspace*{1pt} To prove that
the corresponding persistent homologies $\PH_*(f), \hPH_*^\eps(f)$ satisfy
\[
d_B \bigl(\hPH_*^\eps(f), {\PH}_*(f) \bigr) \le5\eps,
\]
we will use the language of $\eps$-interleaving introduced in \cite
{chazalproximity2009}.
The first step would be to define a discrete version of the filtration
$\cD$ given by
\[
\cD^\eps\triangleq \{D_{L_i+\eps} \}_{i\in\Z},
\]
where $L_i$ is defined in \eqref{eq:levels}. Denote the persistent
homology of $\cD^\eps$ by $\PH_*^\eps(f)$. Since $\cD^\eps$ is a
discrete approximation of the continuous filtration $\cD$, with step
size $2\eps$, the maximum difference between $\PH_*(f)$ and $\PH
_*^\eps(f)$ would be the step size, and thus we have
\[
\label{eq:bn_distance_cont} d_B\bigl(\PH_*^\eps(f), \PH_*(f)\bigr)
\le2\eps.
\]
To prove the theorem, it is therefore enough to show that with a high
probability we have $d_B(\hPH_*^\eps(f), \PH_*^\eps(f))\le3\eps$.

Let $E$ be the event that we have the following sequence of inclusions:
%
%
\begin{equation}
\label{eq:interleave}\small %
\begin{array} {ccccccccccccc} D_{L_0+\eps} &
& & & D_{L_1+\eps} & & & & D_{L_2+\eps} & & & & \cdots
\\
& \hookurarrow& & \hookdrarrow& & \hookurarrow& & \hookdrarrow& & \hookurarrow&
& \hookdrarrow&
\\
& & \hat D_{L_0}(n,r) & & & & \hat D_{L_1}(n,r) & & & & \hat
D_{L_2}(n,r) & & \end{array}.
\end{equation}
Applying Lemma~\ref{lem:set_bounds} $N_\eps$ times we can show that
if $n$ is large enough
\[
\mathbb{P} (E ) \ge1- 3 nN_\eps e^{-C^\star_{\eps/2}nr^d}.
\]
Using the notation in \cite{chazalproximity2009} \eqref
{eq:interleave} implies that $\cD^\eps$ and $\hat{\cD}^\eps$ are
\emph{weakly $\eps$-interleaving}.
Denoting the persistent homology of $\hat{\cD}^\eps$ by $\hPH
_*^\eps(f)$, using Theorem 4.3 in \cite{chazalproximity2009} yields
%
%
\begin{equation}
\label{eq:ph_estimate_dist} d_B \bigl(\hPH_*^\eps(f), {
\PH}_*^\eps(f) \bigr)\le3\eps.
\end{equation}
This completes the proof.
\end{pf*}


\begin{pf*}{Proof of Theorem~\ref{teo:persistence_rips}}
Consider the following sequence of simplicial complexes,
\[
C_{L\pm\eps}(n,r) \hookrightarrow R_{L\pm\eps}(n,r) \hookrightarrow
C_{L\pm\eps}(n, \sqrt{2}r).
\]
This sequence induces the following sequence in homology
\[
H_*\bigl(C_{L\pm\eps}(n,r)\bigr) \to H_*\bigl( R_{L\pm\eps}(n,r)\bigr)
\to H_*\bigl(C_{L\pm
\eps}(n, \sqrt{2}r)\bigr),
\]
or equivalently,
%
%
\begin{equation}
\label{eq:cech_rips_inc} H_*\bigl( \hat D_{L\pm\eps}(n,r)\bigr) \to H_*\bigl(
R_{L\pm\eps}(n,r)\bigr) \to H_*\bigl( \hat D_{L\pm\eps}(n,\sqrt{2}r)
\bigr).
\end{equation}
From the proof of Lemma~\ref{lem:set_bounds} (see \eqref
{eq:prob_down_bound},\eqref{eq:prob_up_bound}) we have that
\begin{eqnarray*}
\mathbb{P} \bigl(D^{\downarrow}_{L+\sklfrac{3}{2}\eps}(2r) \not \subset
\hat D_{L+\eps }(n,r) \bigr) &\le&2ne^{-C^\star_{\eps/2}
nr^d},
\\
\mathbb{P} \bigl(D^{\downarrow}_{L-\sklfrac{1}{2}\eps}( 2r) \not \subset
\hat D_{L-\eps }(n,r) \bigr) &\le& 2ne^{-C^\star_{\eps/2}
nr^d},
\\
\mathbb{P} \bigl(\hat D_{L+\eps}(n,\sqrt{2} r) \not\subset
D^{\uparrow}_{L+ \sklfrac {1}{2}\eps}( 2\sqrt{2}r) \bigr) &\le& ne^{-C^\star_{\eps/2} 2^{d/2}nr^d},
\\
\mathbb{P} \bigl(\hat D_{L-\eps}(n,\sqrt{2}r) \not\subset
D^{\uparrow}_{L- \sklfrac {3}{2}\eps}( 2\sqrt{2}r) \bigr) &\le& ne^{-C^\star_{\eps/2} 2^{d/2}nr^d}.
\end{eqnarray*}
Therefore, for $n$ large enough we have
\begin{eqnarray*}
\mathbb{P} \bigl(D^{\downarrow}_{L+\sklfrac{3}{2}\eps}(2r) \subset \hat
D_{L+\eps }(n,r)\subset\hat D_{L+\eps}(n,\sqrt{2}r)\subset
D^{\uparrow}_{L+ \sklfrac {1}{2}\eps}( 2\sqrt{2}r) \bigr) &\ge& 1-3ne^{C^\star_{\eps/2} nr^d},
\\
\mathbb{P} \bigl(D^{\downarrow}_{L-\sklfrac{1}{2}\eps}( 2r) \subset \hat
D_{L-\eps }(n,r)\subset\hat D_{L-\eps}(n,\sqrt{2}r)\subset
D^{\uparrow}_{L- \sklfrac {3}{2}\eps}( 2\sqrt{2}r) \bigr) &\ge& 1-3ne^{C^\star_{\eps/2} nr^d}.
\end{eqnarray*}
Since we assume that all the levels we study are $\eps$-regular, if
$r$ is small enough we can order them in the following way
\begin{eqnarray*}
D_{L+2\eps} &\subset& D^{\downarrow}_{L+\sklfrac{3}{2}\eps}(2r) \subset
D^{\uparrow} _{L+\sklfrac{1}{2}\eps}(2\sqrt{2}r) \subset D_L \subset
D^{\downarrow
}_{L-\sklfrac
{1}{2}\eps}(2r)
\\
&\subset& D^{\uparrow}_{L-\sklfrac{3}{2}\eps}(2
\sqrt{2}r) \subset D_{L-2\eps}.
\end{eqnarray*}
Combining that with \eqref{eq:cech_rips_inc}, we conclude that with a
high probability we have the following sequence in homology (induced by
composing inclusion maps),
\[
\begin{array} {ccccccc} \bigstar& H_*(D_{L+2\eps})&
\rightarrow&H_*\bigl(D^{\downarrow}_{L+\sklfrac
{3}{2}\eps
}(2r)\bigr) & \rightarrow& H_*
\bigl(\hat D_{L+\eps}(n,r)\bigr) &
\\
&&&&& \downarrow&
\\
&&&&& H_*\bigl(R_{L+\eps}(n,r)\bigr)& \bigstar
\\
&&&&& \downarrow&
\\
&&\leftarrow&H_*\bigl(D^{\uparrow}_{L+ \sklfrac{1}{2}\eps}( 2\sqrt {2}r)\bigr)&
\leftarrow & H_*\bigl(\hat D_{L+\eps}(n,\sqrt{2}r)\bigr)&
\\
\bigstar& H_*(D_L) &&&&&
\\
&&\rightarrow& H_*\bigl(D^{\downarrow}_{L-\sklfrac{1}{2}\eps
}(2r)\bigr)&\rightarrow&
H_*\bigl(\hat D_{L-\eps}(n,r)\bigr) &
\\
&&&&&\downarrow&
\\
&&&&& H_*\bigl(R_{L-\eps}(n,r)\bigr) & \bigstar
\\
&&&&&\downarrow&
\\
\bigstar& H_*(D_{L-2\eps}) &\leftarrow&H_*\bigl(D^{\uparrow}_{L- \sklfrac
{3}{2}\eps
}(
2\sqrt{2}r)\bigr)&\leftarrow& H_*\bigl(\hat D_{L-\eps}(n,\sqrt{2}r)\bigr) &

\end{array} %
\]
Taking out the spaces marked in $\bigstar$ we have
\[
H_*(D_{L+2\eps})\to H_*\bigl(R_{L+\eps}(n,r)\bigr) \to
H_*(D_L) \to H_*\bigl(R_{L-\eps}(n,r)\bigr) \to
H_*(D_{L-2\eps}).
\]
Since $f(x)$ has no critical values in $[L-2\eps, L+2\eps]$, using
Lemma~\ref{lem:algebra} completes the proof.
\end{pf*}


\subsection{Proving the lemmas}
One of the main probability tools we use is Bernstein's inequality
\cite{RAP}, basically a law of large numbers bound.
If $Z_1,\ldots, Z_n$ are $\iid$, with $\E \{{Z_i} \} = 0,
\Var(Z_i) =
\sigma^2$ such that $\llvert {Z_i}\rrvert \le M$ almost
surely, then
%
%
\begin{equation}
\mathbb{P} \Biggl(\sum_{i=1}^n
Z_i \ge t \Biggr) \le\exp \biggl(-\frac{t^2/2}{n\sigma^2+Mt/3}
\biggr)\label
{eq:bernstein}.
\end{equation}


\begin{pf*}{Proof of Lemma~\ref{lem:prob_filtering} (Density estimation)}
To reconstruct the level sets of the density, we will use a kernel
density estimator.
Recall that the kernel function $K:\R^d\to\R$ we use satisfies the following:
\begin{itemize}
\item$\supp(K) \subset B_1(0)$,
\item$K(x) \in[0,1]$, and $K(0) = 1$,
\item$\int K(\xi)\,d\xi= C_K$, for some $C_K \in(0,1)$.
\end{itemize}
In this case, our kernel estimator is
\[
\hat f_n(x) = \frac{\sum_{i=1}^n K_r(x-X_i)}{C_K n r^d},
\]
where $K_r(x) = K(x/r)$.
We start by proving \eqref{eq:prob_filtering_lower}.
Using a simple union bound we have
%
%
\begin{eqnarray}\label{eq:density_total_prob_lower}
\nonumber
\mathbb{P} \bigl(\exists X_i \notin
D^{\uparrow}_{L-\eps}(r): \hat f_n(X_i) \ge L
\bigr) &\le& n \mathbb{P} \bigl(X_1 \in
\bigl(D^{\uparrow}_{L-\eps}(r)\bigr)^c: \hat
f_n(X_1) \ge L \bigr)
\nonumber\\[-8pt]\\[-8pt]\nonumber
&=& n \int_{(D^{\uparrow}_{L-\eps}(r))^c} f_X(x) \mathbb{P} \bigl(\hat
f_n(X_1) \ge L\given X_1=x \bigr)\,dx.
\end{eqnarray}
Next,
%
%
\begin{eqnarray}\label{eq:prob_density}
\nonumber
\mathbb{P} \bigl(\hat f_n(X_1) \ge L\given
X_1=x \bigr) &=& \mathbb{P} \Biggl(K_r(0) +
\sum_{i=2}^n K_r(x-X_i)
\ge L C_Knr^d \Biggr)
\nonumber\\[-8pt]\\[-8pt]\nonumber
&=&\mathbb{P} \Biggl(\sum_{i=2}^n
Z_i \ge n\bigl(LC_Kr^d - p_r(x)
\bigr) +p_r(x)-1 \Biggr),
\end{eqnarray}
where
\[
p_r(x) \triangleq\E \bigl\{{K_r(x-X_i)}
\bigr\},
\]
and $Z_i = K_r(x-X_i)-p_r(x)$ are independent variables with $\E
\{{Z_i} \} = 0$.
Note that $p_r(x)\in[0,1]$ since $K_r(x)\in[0,1]$. Also, since $x\in
(D^{\uparrow}_{L-\eps}(r))^c$, we have that
%
%
\begin{equation}
\label{eq:pr_upper_bound} p_r(x) = \int_{B_r(x)} f(
\xi)K_r(x-\xi)\,d\xi\le(L-\eps)C_Kr^d,
\end{equation}
and therefore from \eqref{eq:prob_density} we have,
%
%
\begin{equation}
\label{eq:prob_density_bound} \mathbb{P} \bigl(\hat f_n(X_1) \ge L
\given X_1=x \bigr) \le\mathbb{P} \Biggl(\sum
_{i=2}^n Z_i \ge\eps
C_Knr^d -1 \Biggr).
\end{equation}
We would like to apply the inequality in \eqref{eq:bernstein} for $t
=\eps C_Kn r^d -1$. Note that $\llvert {Z_i}\rrvert  \le1$,
and also that
\[
\operatorname{Var} ({Z_i} ) \le\E \bigl\{ {K_r^2(x-X_i)}
\bigr\} \le\fmax C_K r^d.
\]
Therefore, we have
\begin{eqnarray*}
\mathbb{P} \bigl(\hat f_n(X_1) \ge L\given
X_1=x \bigr) &\le&\exp \biggl(-\frac{t^2/2}{(n-1) \fmax C_K r^d+ t/3} \biggr)
\\
&=& \exp \biggl(-\frac{t/2}{t^{-1}(n-1)
\fmax C_K r^d+ \sfrac{1}{3}} \biggr).
\end{eqnarray*}
Since $nr^d\to\infty$, we have
\[
\frac{\sklfrac{1}{2}t(nr^d)^{-1}}{t^{-1}(n-1) \fmax C_K r^d+ \sfrac
{1}{3}} \to\frac{3\eps^2C_K}{6f_{\max}+2\eps} > \frac{\eps^2 C_K
}{3{\fmax}+ \eps}.
\]
Thus, for $n$ large enough we have
\[
\mathbb{P} \bigl(\hat f_n(X_1) \ge L\given
X_1=x \bigr) \le e^{-C^\star_\eps nr^d},
\]
where
%
%
\begin{equation}
\label{eq:c_star_density} C^\star_\eps= \frac{\eps^2 C_K }{3{\fmax}+ \eps}.
\end{equation}
Which completes the proof of \eqref{eq:prob_filtering_lower}

To prove \eqref{eq:prob_filtering_upper} we start the same way, and
similarly to \eqref{eq:prob_density_bound} we have,
\begin{eqnarray}
\nonumber
\mathbb{P} \bigl(\hat f_n(X_1) \le L\given
X_1=x \bigr) \le \mathbb{P} \Biggl(\sum
_{i=2}^n Z_i \le- \eps
C_Kn r^d \Biggr),
\end{eqnarray}
where we used the fact that $x\in D^{\downarrow}_{L+\eps,r}$, and
therefore we
have $(L+\eps)C_K r^d\le p_r(x)\le1$. Thus, to complete the proof we
should use \eqref{eq:bernstein} for the variables $(-Z_i)$ and $t =
\eps C_K nr^d$. Similarly to the proof above, we then have that
\[
\mathbb{P} \bigl(\hat f_n(X_1) \le L\given
X_1=x \bigr) \le e^{-C^\star_\eps n r^d },
\]
which completes the proof.
\end{pf*}


\begin{pf*}{Proof of Lemma~\ref{lem:prob_filtering} (Kernel regression)}
Recall that in the kernel regression model, we have a set of pairs
$(X_1,Y_1),\ldots, (X_n,Y_n)$, where the pairs are $\iid$, $X_i \in
\R^d$, $Y_i \in\R$, and they have a common density function
$f_{X,Y}:\R^d\times\R\to\R$. Our estimation target is the
conditional expectation
\[
f(x) = \E \{Y\given X=x \}.
\]
The estimator we use is given by
\[
\hat f_n(x) = \frac{\sum_{i=1}^n Y_i K_r(x-X_i)}{\sum_{i=1}^n K_r(x-X_i)},
\]
where the assumptions on $K_r$ are the same as above. In addition we
have the following assumptions:
\begin{itemize}
\item$f_X$ has a compact support -- $\supp(f)$.
\item$\fmin\triangleq\inf_{x\in\supp(f)} f_X(x)> 0$,
\item$\llvert {Y_i}\rrvert  \le Y_{\max}$ almost surely, for
some non-random
value $\Ymax> 0$.
\end{itemize}

We start by proving \eqref{eq:prob_filtering_lower}. We use the union
bound again to have
%
%
\begin{eqnarray}
\label{eq:regression_total_prob_lower} %
&&
\mathbb{P} \bigl(\exists X_i
\notin D^{\uparrow}_{L-\eps}(r): \hat f_n(X_i)
\ge L \bigr)
\nonumber\\[-8pt]\\[-8pt]\nonumber
&& \quad\le n \int_{(D^{\uparrow}_{L-\eps}(r))^c}\int_{\R}
f_{X,Y}(x,y) \mathbb{P} \bigl(\hat f_n(X_1)
\ge L\given X_1=x, Y_1=y \bigr)\,dy\,dx.
\end{eqnarray}
Note that writing $\hat f_n(x) \ge L$ is equivalent to
\[
\sum_{i=1}^n Y_i
K_r(x-X_i) \ge\sum_{i=1}^n
L K_r(x-X_i).
\]
Using the fact that $x \in(D^{\uparrow}_{L-\eps}(r))^c$, similar derivations
to the ones used for density functions can be applied to show that
\begin{eqnarray*}
\mathbb{P} \bigl(\hat f_n(X_1) \ge L\given
X_1=x, Y_1=y \bigr) &\le& \mathbb{P} \Biggl(
\sum_{i=2}^n Z_i \ge\eps(n-1)
p_r(x) + L-y \Biggr)
\\
&\le&\mathbb{P} \Biggl(\sum_{i=2}^n
Z_i \ge\eps f_{\min}C_K(n-1) r^d +
L-y \Biggr),
\end{eqnarray*}
where here
\[
Z_i \triangleq\bigl(Y_i - f(X_i)\bigr)
K_r(x-X_i) - \eps\bigl(K_r(x-X_i)
- p_r(x)\bigr),
\]
and $p_r(x) = \E \{{K_r(x-X_i)} \}$, and we used the fact
that $p_r(x)
\ge\fmin C_K r^d$.
We would like to use Bernstein's inequality to bound this probability.
First, denote
\begin{eqnarray*}
Z_i^{(1)} &=& \bigl(Y_i-f(X_i)
\bigr)K_r(x-X_i),
\\
Z_i^{(2)} &=& \eps\bigl(K_r(x-X_i)
- p_r(x)\bigr).
\end{eqnarray*}
Then it is easy to show that $\E\{{Z_i^{(1)}}\} = \E\{{Z_i^{(2)}}\} =
\E\{{Z_i^{(1)} Z_i^{(2)}}\} = 0$, which implies that $Z_i^{(1)}$ and
$Z_i^{(2)}$ are uncorrelated, and therefore
\[
\sigma^2 = \operatorname{Var} ({Z_i} ) = \operatorname
{Var} \bigl({Z_i^1} \bigr) + \operatorname{Var}
\bigl({Z_i^2} \bigr).
\]
Also, it is easy to show that
\[
\operatorname{Var}\bigl({Z_i^{(1)}}\bigr) = \E \bigl\{{
\operatorname{Var} ({Y_i\mid X_i} ) K_r^2(x-X_i)}
\bigr\}.
\]
Therefore, we have:
\begin{itemize}
\item$\operatorname{Var}({Z_i^{(1)}}) \le\Ymax^2 \E \{
{K_r^2(x-X_i)} \} \le\Ymax
^2C_K\fmax r^d$,
\item$\operatorname{Var}({Z_i^{(2)}}) \le\eps^2 \E \{
{K_r^2(x-X_i)} \} \le\eps^2
C_K \fmax r^d$,
\item and almost surely:
\[
\llvert {Z_i}\rrvert \le\llvert {Y_i}\rrvert + \bigl
\llvert {f(X_i)}\bigr\rrvert + \eps\bigl(1+p_r(x)\bigr)
\le2\Ymax +\eps\bigl(1+C_K\fmax r^d\bigr) < 2(\Ymax+
\eps).
\]
\end{itemize}

Using Bernstein's inequality \eqref{eq:bernstein}, for $t = \eps
f_{\min}C_K(n-1) r^d + L-y$, we then have
\begin{eqnarray*}
&&\mathbb{P} \bigl(\hat f_n(X_1) \ge L\given
X_1=x, Y_1=y \bigr)
\\
&&\quad\le\exp \biggl(-\frac{t/2}{t^{-1}( \Ymax^2 +\eps
^2)C_K\fmax(n-1)r^d +\sklfrac{2}{3}(\Ymax+\eps)} \biggr).
\end{eqnarray*}
Since $nr^d\to\infty$, we have that
\begin{eqnarray*}
&& \frac{\sklfrac{1}{2}t(nr^d)^{-1}}{t^{-1}( \Ymax^2 +\eps^2)C_K\fmax
(n-1)r^d +\sklfrac{2}{3}(\Ymax+\eps)}
\\
&&\quad \to \frac{3\eps^2 \fmin^2
C_K}{6( \Ymax^2+\eps^2)\fmax+4\eps\fmin(\Ymax+\eps)}
\\
&&\quad > \frac{\eps^2 \fmin^2 C_K}{3( \Ymax^2+\eps^2)\fmax+2\eps\fmin
(\Ymax+\eps)}.
\end{eqnarray*}
Thus, for $n$ large enough we have
\[
\mathbb{P} \bigl(\hat f_n(X_1) \ge L\given
X_1=x, Y_1=y \bigr) \le e^{-C^\star_\eps nr^d},
\]
where
%
%
\begin{equation}
\label{eq:c_star_regression} C^\star_\eps= \frac{\eps^2 \fmin^2 C_K}{3( \Ymax^2+\eps^2)\fmax
+2\eps\fmin(\Ymax+\eps)}.
\end{equation}
Putting this back into \eqref{eq:regression_total_prob_lower}
completes the proof of \eqref{eq:prob_filtering_lower}.
The proof of \eqref{eq:prob_filtering_upper} is similar, with some
adjustments, and we omit it here.
\end{pf*}


To prove Lemma~\ref{lem:set_bounds} we need the following lemma.


%
\begin{lem} \label{lem:prob_coverage}
If $nr^d \to\infty$, then
\[
\mathbb{P} \bigl(D^{\downarrow}_{L+\eps}(2r) \not\subset\hat
D_L(n,r) \bigr) \le2 n e^{-C^\star_\eps n r^d},
\]
where $C^\star_\eps$ is the same as in Lemma~\textup{\ref{lem:prob_filtering}}.
\end{lem}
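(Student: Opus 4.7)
The plan is to decompose the event $\{\vD_{L+\eps}(2r)\not\subset\hat D_L(n,r)\}$ into two contributions: a mis-classification event $E_1=\{\exists X_i\in\vD_{L+\eps}(r):\ \hat f_n(X_i)<L\}$ and a coverage failure event $E_2=\{\exists x\in\vD_{L+\eps}(2r):\ B_r(x)\cap\{X_1,\ldots,X_n\}=\emptyset\}$. The geometric glue is a one-line triangle inequality: if $x\in\vD_{L+\eps}(2r)$ and $\|X_i-x\|\le r$, then $\dist(X_i,\partial D_{L+\eps})\ge 2r-r=r$, and since $B_r(x)$ is connected and does not meet $\partial D_{L+\eps}$, necessarily $X_i\in D_{L+\eps}$; hence $X_i\in\vD_{L+\eps}(r)$. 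Consequently, on the complement of $E_1\cup E_2$, every $x\in\vD_{L+\eps}(2r)$ has some $X_i\in B_r(x)$ which must lie in $\vD_{L+\eps}(r)$ and therefore satisfies $\hat f_n(X_i)\ge L$; this puts $X_i\in\cX_n^L$ and thus $x\in\hat D_L(n,r)$.

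The bound on $\PP(E_1)$ is immediate from the false-negative half of Lemma \ref{lem:prob_filtering}, equation \eqref{eq:prob_filtering_upper}, giving $\PP(E_1)\le ne^{-C^\star_\eps nr^d}$. For $\PP(E_2)$ I would run a standard $(r/2)$-net argument: take a minimal $(r/2)$-net $\cY$ of $\vD_{L+\eps}(2r)$. Boundedness of $D_{L+\eps}$ (assumed in the density case, and following from compactness of $\supp(p)$ in the regression case) gives $|\cY|\le C_0\, r^{-d}$ for some geometric constant $C_0$. For each $y\in\cY$, the ball $B_{r/2}(y)$ sits inside $D_{L+\eps}\cap\supp(p)$, so
\[
\PP(X_1\in B_{r/2}(y))\ge c_L\,\omega_d\,(r/2)^d,
\]
where $c_L = L+\eps$ in the density case and $c_L=\fmin$ in the regression case. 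Because each $x\in\vD_{L+\eps}(2r)$ is at distance at most $r/2$ from some $y\in\cY$, yielding $B_{r/2}(y)\subset B_r(x)$, a union bound over $\cY$ produces
\[
\PP(E_2)\le C_0\, r^{-d}\exp\!\bigl(-c_L\,\omega_d\, 2^{-d}\, nr^d\bigr).
\]

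The only remaining step, and the main bookkeeping obstacle, is to absorb this coverage bound into the target form $ne^{-C^\star_\eps nr^d}$. Comparing the explicit formulas \eqref{eq:C_star_density} and \eqref{eq:C_star_regression}, $C^\star_\eps$ is of order $\eps^2$, while the coverage rate $c_L\omega_d/2^d$ is a fixed positive constant, so the ratio $(C_0/(nr^d))\exp\bigl((C^\star_\eps-c_L\omega_d 2^{-d})\, nr^d\bigr)$ tends to zero as $nr^d\to\infty$. Hence $\PP(E_2)\le ne^{-C^\star_\eps nr^d}$ for $n$ large enough, and summing with the bound on $\PP(E_1)$ delivers the stated inequality $\PP(\vD_{L+\eps}(2r)\not\subset\hat D_L(n,r))\le 2ne^{-C^\star_\eps nr^d}$.
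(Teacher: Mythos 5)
Your decomposition into a filtering event $E_1$ and a coverage event $E_2$, together with the triangle-inequality observation that any $X_i\in B_r(x)$ with $x\in\vD_{L+\eps}(2r)$ necessarily lies in $\vD_{L+\eps}(r)$, is the same approach as the paper's (the paper decomposes after introducing the net, you decompose before; the two resulting terms are identical), and your statement of the events is arguably cleaner. The net-and-union-bound treatment of $E_2$ also matches the paper's.

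The gap is in the final absorption step, which you yourself flag as the bookkeeping obstacle. You hardcode a $(r/2)$-net, which gives a coverage exponent $c_L\omega_d 2^{-d}$, and then argue this dominates $C^\star_\eps$ because ``$C^\star_\eps$ is of order $\eps^2$ while the coverage rate is a fixed positive constant.'' This is not sound: $\eps$ is not assumed small in the lemma (it ranges over all of $(0,L)$), and $\omega_d 2^{-d}$ decays geometrically in $d$, so $c_L\omega_d 2^{-d}$ can fall below $C^\star_\eps$ once $d$ is moderate or $\eps$ is close to $L$. In that regime the factor $\exp\paren{(C^\star_\eps-c_L\omega_d 2^{-d})\,nr^d}$ diverges rather than vanishes, and the claimed domination fails. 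The paper avoids this by using a $\delta r$-net with $\delta\in(0,1)$ a free parameter; the coverage exponent then becomes $c_L(1-\delta)^d\omega_d$, which tends to $c_L\omega_d$ as $\delta\to 0$, and a direct check from \eqref{eq:C_star_density} and \eqref{eq:C_star_regression} (using $\eps<L$, $\fmax\ge L+\eps$, $C_K<\omega_d$, and the analogous regression bounds) shows $C^\star_\eps<c_L\omega_d$ always holds, so $\delta$ can be taken small enough that the coverage exponent strictly exceeds $C^\star_\eps$. Your proof goes through once you replace $\delta=1/2$ by a sufficiently small $\delta$ and verify the strict inequality $C^\star_\eps<c_L\omega_d$ rather than appeal to orders of magnitude in $\eps$.
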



\begin{pf}
Note that in both cases (density estimation and kernel regression) we
have that the set $D^{\downarrow}_{L+\eps}(2r)$ is bounded.
Let $\delta\in(0,1)$, and let $\cS\subset D^{\downarrow}_{L+\eps
}(2r)$ be a
finite set of points satisfying that for every $x\in D^{\downarrow
}_{L+\eps
}(2r)$ there exists $s\in\cS$ such that $\llVert  x-s\rrVert
\le\delta r$.
Then there exists a constant $c> 0$ such that we can construct $\cS$
with $\llvert {\cS}\rrvert  \le c (\delta r)^{-d}$ points.
Note that if there is
$x\in D^{\downarrow}_{L+\eps}(2r)$ that is not covered by the balls
of radius
$r$, it necessarily means that there is $s\in\cS$ that is not covered
by the balls of radius $(1-\delta)r$. Therefore,
\begin{eqnarray*}
\mathbb{P} \bigl(D^{\downarrow}_{L+\eps}(2r) \not\subset\hat
D_L(n,r) \bigr) &\le&\mathbb{P} \bigl(\exists s\in\cS:
B_{(1-\delta)r}(s) \cap\cX_n^L = \varnothing \bigr)
\\
& =&\mathbb{P} \bigl(\exists s\in\cS: B_{(1-\delta)r}(s) \cap
\cX_n^L = \varnothing; D^{\downarrow}_{L+\eps}(r)
\cap\cX_n\subset\cX _n^L \bigr)
\\
&&{} +\mathbb{P} \bigl(\exists s\in\cS: B_{(1-\delta)r}(s) \cap
\cX_n^L = \varnothing; D^{\downarrow}_{L+\eps}(r)
\cap\cX_n \not\subset\cX _n^L \bigr)
\\
&\le&\mathbb{P} \bigl(\exists s\in\cS: B_{(1-\delta)r}(s) \cap
\cX_n = \varnothing \bigr) +\mathbb{P} \bigl(D^{\downarrow}_{L+\eps}(r)
\cap\cX_n \not\subset \cX_n^L \bigr),
\end{eqnarray*}
where the last inequality is due to the fact that for every two events
$A,B$ we have $\mathbb{P} (A\cap B ) \le\mathbb{P} (A )$.
In other words the event of not covering $D^{\downarrow}_{L+\eps
}(2r)$ might
occur for two different reasons. Either the original sample (before
filtering) $\cX_n$ does not cover $D^{\downarrow}_{L+\eps}(2r)$ (the first
term), or our filtering method got rid of too many points (second term).
The second term can be bounded using Lemma~\ref{lem:prob_filtering}.
For the first term we have
\begin{eqnarray*}
\mathbb{P} \bigl(\exists s\in\cS: B_{(1-\delta)r}(s) \cap
\cX_n = \varnothing \bigr) &\le&\sum_{s\in\cS}\mathbb{P} \bigl(B_{(1-\delta)r}(s) \cap\cX_n = \varnothing \bigr)
\\
&=& \sum_{s\in\cS} \bigl(1- F\bigl(B_{(1-\delta)r}(s)
\bigr)\bigr)^{n}
\\
& \le&\sum_{s\in\cS} e^{-n F(B_{(1-\delta)r}(s))},
\end{eqnarray*}
where $F(A) = \int_A f_X(x)\,dx$. For the density estimation, $s\in
D^{\downarrow}
_{L+\eps}(2r)$ implies that
\[
F\bigl(B_{(1-\delta)r}(s)\bigr) \ge(L+\eps) (1-\delta)^d
\omega_dr^d \ge L(1-\delta)^d
\omega_d r^d.
\]
For the kernel regression model, we have that
\[
F\bigl(B_{(1-\delta)r}(s)\bigr) \ge\fmin(1-\delta)^d
\omega_dr^d.
\]

Thus, if we choose $C_1 = c\delta^{-d}$, and
\[
C_2 = \cases{ L(1-\delta)^d\omega_d, &\quad
density estimation,
\vspace*{3pt}\cr
\fmin(1-\delta)^d\omega_d, &\quad
kernel regression,}
\]
we have that
\[
\mathbb{P} \bigl(\exists s\in\cS: B_{(1-\delta)r}(s) \cap
\cX_n = \varnothing \bigr) \le C_1 r^{-d}
e^{-C_2 nr^d}.
\]
From Lemma~\ref{lem:prob_filtering} we know that
\[
\mathbb{P} \bigl(D^{\downarrow}_{L+\eps}(r)\cap
\cX_n \not\subset \cX_n^L \bigr) \le
ne^{-C^\star_\eps nr^d}.
\]
Note that for both models we have that $C^\star_\eps< C_2$ (see
\eqref{eq:c_star_density}, \eqref{eq:c_star_regression}), and also
that $r^{-d} = o(n)$. Therefore the latter probability is necessarily
the dominant one in the bound we have.
This completes the proof.
\end{pf}


\begin{pf*}{Proof of Lemma~\ref{lem:set_bounds}}
If $nr^d\to\infty$, then by Lemma~\ref{lem:prob_coverage} we have
%
%
\begin{equation}
\label{eq:prob_down_bound} \mathbb{P} \bigl(D^{\downarrow}_{L+\eps}(2r)
\not\subset\hat D_L(n,r) \bigr) \le2n e^{-C^\star_\eps n r^d}.
\end{equation}
In addition, from Lemma~\ref{lem:prob_filtering} we have
%
%
\begin{equation}
\label{eq:prob_up_bound} \mathbb{P} \bigl(\hat D_L(n,r) \not\subset
D^{\uparrow}_{L-\eps
}(2r) \bigr) \le\mathbb{P} \bigl(
\cX_n^L \cap\bigl(D^{\uparrow}_{L-\eps}(r)
\bigr)^c\ne \varnothing \bigr) \le ne^{-C^\star_\eps n r^d}.
\end{equation}
Using the union bound completes the proof.
\end{pf*}


The last piece of the puzzle is the proof of the algebraic Lemma~\ref
{lem:algebra}.

\begin{pf*}{Proof of Lemma~\ref{lem:algebra}}
We need to show that $g_{{34}}$ is injective and that $\im(g_{{34}})
= G_{24}$.
\begin{enumerate}[2.]
\item The assumption that $g_{{35}}$ is an isomorphism from $G_3$ to
$G_{15}$ implies that $g_{{35}}$ is injective. Since $g_{{35}} =
g_{{45}}\circ g_{{34}}$ we have that $g_{{34}}$ is injective as well.

\item Since (a) $g_{{15}}:G_1\to G_{15}$ is surjective, (b)
$g_{{35}}:G_3\to G_{15}$ is an isomorphism, and (c) $g_{{15}} =
g_{{35}}\circ g_{{13}}$, we conclude that $g_{{13}}:G_1\to G_3$ is
surjective. Since $g_{{13}} = g_{{23}}\circ g_{{12}}$, we have that
$g_{{23}}$ is surjective as well.

Finally, since (a) $\im(g_{{24}}) = G_{24}$, (b) $g_{{24}} =
g_{{34}}\circ g_{{23}}$, and (c) $g_{{23}}:G_2\to G_3$ is
surjective, we have that $\im(g_{{34}}) = G_{24}$ as well.
\end{enumerate}
\end{pf*}
\end{appendix}

\section*{Acknowledgements}
The authors would like to thank: Robert Adler, Paul Bendich, Ulrich
Bauer, Ezra Miller and Andrew Nobel for many useful discussions.
We also wish to thank Fr\'{e}d\'{e}ric Chazal, Larry Wasserman and the
anonymous referees for very useful comments on previous revisions of
this paper.

Omer Bobrowski gratefully acknowledges the support of AFOSR:
FA9550-10-1-0436, and NSF DMS-1127914.
Sayan Mukherjee is pleased to acknowledge support from grants AFOSR:
FA9550-10-1-0436, and NSF: CCF-1049290.
Jonathan E. Taylor was supported by the AFOSR, Grant 113039.


%

\printhistory
\end{document}